\providecommand{\keywords}[1]
{
	\small	
	{\textit{Keywords---}} #1
}
\newcommand{\Ker}{\mathrm{Ker}}
\newcommand{\id}{\mathrm{id}}
\newcommand{\tr}{\operatorname{tr}}
\newtheorem{theorem}{Theorem}
\newtheorem{lemma}[theorem]{Lemma}
\newtheorem{corollary}[theorem]{Corollary}
\begin{document}
\title{The purity locus of matrix Kloosterman sums}
\author{M\'arton Erd\'elyi, Will Sawin, and \'Arp\'ad T\'oth} 

\maketitle

\begin{abstract}
	We establish the exact structure of the cohomology associated to a certain matrix exponential sum investigated in \cite{Erdelyi-Toth}. 
\end{abstract}

\keywords{Kloosterman sums, perverse sheaves, Springer correspondance}

\section{Introduction}

Fix $\mathbb F_q$ a finite field with characteristic $p$ and algebraic closure $\overline{\mathbb{F}_q}$, $\psi$ an additive character of $\mathbb F_q$, and $n$ a natural number. Let $\mathbb{F}_{q^m}\subset\overline{\mathbb{F}_q}$ be the degree $m$ extension of $\mathbb{F}_q$ and $\psi_m=\mathrm{Tr}_{\mathbb{F}_{q^m}|\mathbb{F}_q}\circ\psi$.

For a matrix $a \in M_n(\mathbb{F}_q)=\mathbb{F}_q^{n\times n}$ the exponential sums
\[K(a,\mathbb{F}_{q^m})=K(a)=\sum_{x\in\mathrm{GL}_n(\mathbb{F}_{q^m})}\psi_m(\tr(ax+x^{-1}))\]
(here $\tr$ is the matrix trace) and the related cohomology complex
\[
H^*(a)=H^*_c ( GL_{n, \overline{\mathbb F_q}}, \mathcal L_\psi ( \tr ( a x^{-1} +x ) ))
\]
are investigated in \cite{Erdelyi-Toth}. 
The relation between the exponential sum and the complex can be made explicit by the Grothendieck trace formula (\cite{Grothendieck}): 
\[K(a,\mathbb{F}_{q^m})=\sum_{i=0}^{2d}(-1)^i\sum_{j=1}^{d_i}\iota(\lambda_j^i)^m,\]
where $d=\dim(\mathrm{GL}_n)=n^2$, $d_i=\dim(H^i(a))$, $\iota:\mathbb{Q}(\psi(1))\to\mathbb{C}$ is the embedding sending $\psi(1)$ to $\exp(2\pi i/p)$ and the $\lambda_j^i$-s are the Frobenius eigenvalues on $H^i(a)$ of some integral weight $0\leq j\leq i$ (i. e. $|\iota(\lambda_j^i)|=q^{j/2}$).

Although effective bounds were given for the sum or the weights of the Frobenius eigenvalues (not depending on the degree $i$), the exact structure of the cohomology was not determined (cf. Remark 2.):

It was proven that $H^*(a)$ is concentrated in the middle degree $i=n^2$ and all the weights are $n^2$ if $a$ is invertible and has no multiple eigenvalues in $\overline{\mathbb{F}}_q$. In the regular case (i.e. for all eigenvalues in $\overline{\mathbb{F}}_q$ the eigenspace is one-dimensional) it was pointed out that the sum admits square root cancellation, but the methods used there did not allow to decide whether in $H^*(a)$ the single nontrivial cohomology is the middle term or not (or equivalently the Frobenius eigenvalues in different degrees cancel or not).

\vspace{1em}

The sum or the complex is called pure of weight $w$ if all $\lambda_j^i$ is of weight $i+w$. The aim of this paper is to show the cohomology groups are always pure, though of weight varying depending on $a$:

\begin{theorem}\label{thmpurity}
Let $a\in M_n(\mathbb{F}_q)$ be a matrix. Let $k$ be the multiplicity of $0$ as an eigenvalue of $a$. Then $H^*(a)$ is a pure complex of weight $-k$.
\end{theorem}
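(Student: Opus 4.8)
The plan is to realise the whole family $\{H^*(a)\}_a$ as the stalks of a single perverse sheaf on $M_n$ via Fourier transform, and then to compute that sheaf by an induction on $n$ which peels off the kernel of $a$ one vector at a time, the base case $\ker a=0$ being settled by Poincaré duality. Two reductions come first. Conjugating the variable $x$ by a fixed $g\in \mathrm{GL}_n$ turns $\tr(ax^{-1}+x)$ into $\tr(g^{-1}ag\cdot x^{-1}+x)$, so $H^*(a)$ depends only on the conjugacy class of $a$ and the integer $k$ is a conjugacy invariant; we may therefore normalise $a$ at will. Secondly, one half of purity is cheap: $\mathcal L_\psi(\tr(ax^{-1}+x))$ is lisse of rank one and pure of weight $0$ on $\mathrm{GL}_n$, so Deligne's estimate already gives that $H^i(a)=H^i_c(\mathrm{GL}_n,\mathcal L_\psi(\tr(ax^{-1}+x)))$ has weights $\le i$; the substance of the theorem is to sharpen this to weights $\le i-k$ and to prove the reverse inequality $\ge i-k$. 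To organise everything at once, view $\tr(ax)$ as the self-pairing of $M_n\cong\mathbb A^{n^2}$; then $H^*(a)\cong\mathcal M_a[2n^2]$, where $\mathcal M:=\mathrm{FT}_\psi\!\bigl(j_!\mathcal L_\psi(\tr x^{-1})[n^2]\bigr)$ and $j\colon \mathrm{GL}_n\hookrightarrow M_n$ is the affine open immersion. Since $j$ is affine and $\mathrm{FT}_\psi$ preserves perversity, $\mathcal M$ is a perverse sheaf on $M_n$, and Theorem~\ref{thmpurity} becomes the assertion that $\mathcal M_a$ is pure of weight $2n^2-k$ for every $a$. The Springer correspondence is expected to enter through the description of $\mathcal M$: its geometric composition factors should be intersection complexes of closures of conjugacy classes, twisted by Kloosterman-type local systems coming from the nonzero eigenvalues and from the wild part $\mathcal L_\psi(\tr x^{-1})$ along $\{\det x=0\}$, and it is precisely that wildness which collapses the otherwise non-pure stalks of a bare Springer sheaf.

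For the base case, suppose $a$ is invertible. Along the smooth locus of the boundary divisor $\{\det x=0\}$ the function $\tr(ax^{-1})$ has a simple pole, and one checks that there is enough wild ramification along the boundary (after passing to a good compactification of $\mathrm{GL}_n$, also at infinity) that the forget-supports map $H^i_c(\mathrm{GL}_n,\mathcal L_\psi(\tr(ax^{-1}+x)))\to H^i(\mathrm{GL}_n,\mathcal L_\psi(\tr(ax^{-1}+x)))$ is an isomorphism. Poincaré–Verdier duality, together with the isomorphism $\mathcal L_\psi(\tr(ax^{-1}+x))^\vee\cong (x\mapsto -x)^*\mathcal L_\psi(\tr(ax^{-1}+x))$, then yields $H^i(a)^\vee\cong H^{2n^2-i}(a)(n^2)$; combined with the weight bound $\le i$ above, this forces $H^i(a)$ to be pure of weight $i$, i.e. $H^*(a)$ is pure of weight $0=-k$.

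For the inductive step, assume $k\ge1$; after conjugation $ae_n=0$, so $a=\left(\begin{smallmatrix}a_{11}&0\\ a_{21}&0\end{smallmatrix}\right)$ with $a_{11}\in M_{n-1}$, and expanding along the last column gives $\chi_a(t)=t\,\chi_{a_{11}}(t)$, whence $\mathrm{mult}_0(a_{11})=k-1$. Consider $p\colon \mathrm{GL}_n\to\{(n-1)\times n\ \text{matrices of rank }n-1\}$ sending $x$ to the first $n-1$ rows of $x^{-1}$; this is a fibration onto the full-rank locus whose fibres are complements of hyperplanes in $\mathbb A^n$, hence $\cong\mathbb A^{n-1}\times\mathbb G_m$. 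On a fibre the function $\tr(ax^{-1})$ is constant (it does not see the last row of $x^{-1}$, since the last column of $a$ vanishes), while $\tr(x)$ is a ratio of two affine-linear forms in the last row; a direct fibrewise computation of $Rp_!\bigl(\mathcal L_\psi(\tr(ax^{-1}+x))\bigr)$ shows that the contribution vanishes outside the locus $Z$ on which the linear parts of numerator and denominator are proportional, and that on $Z$ it equals a Tate twist by $(-(n-1))$, placed in degree $2n-1$, of the Artin–Schreier sheaf of a linear form. The decisive point is that $Z$ can be identified with $\mathrm{GL}_{n-1}$ so that this last sheaf becomes $\mathcal L_\psi(\tr(a_{11}x'^{-1}+x'))$; granting this, $H^*(a)$ is assembled from $H^{*-(2n-1)}(a_{11})(-(n-1))$ (plus contributions of strictly lower-dimensional strata of $Z$), and since the operation $(-(n-1))[2n-1]$ shifts the purity weight by exactly $-1$, the inductive hypothesis that $H^*(a_{11})$ is pure of weight $-(k-1)$ gives that $H^*(a)$ is pure of weight $-k$, after a further conjugation bringing $a_{11}$ into the standard form.

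\textbf{The main obstacle} is the fibrewise analysis of $Rp_!$ in the inductive step: pinning down the locus $Z$ together with the sheaf it carries, and—most delicately—showing that the lower-dimensional strata appearing in $Rp_!$ (where an extra factor coming from $H^2_c(\mathbb G_m)$ is produced) contribute only classes compatible with purity of weight $-k$. Equivalently, in the Fourier-transform picture one must run the recursion in a way that is uniform in the Jordan type of the nilpotent part of $a$, so that only its size $k$, and not the underlying partition, enters the final weight; it is this uniformity that the Springer correspondence is needed to organise. A secondary point requiring care is the boundary-vanishing statement in the base case, since the wild ramification of $\tr(ax^{-1}+x)$ along $\{\det x=0\}$ degenerates on a proper closed subset and must be controlled there as well.
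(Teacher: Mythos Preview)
Your plan is substantially different from the paper's route and, as you yourself flag, is incomplete at its two critical junctures.

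The paper does not argue by induction on $n$. After establishing (via Fourier transform, essentially as you do) that $K[2n^2]$ is an irreducible pure perverse sheaf on $M_n$, it proves that $K$ is a \emph{direct summand} of $R\pi_*\bigl(\bigotimes_{i=1}^n\lambda_i^*\mathcal{K}\ell_2\bigr)[-n^2](n(n-1)/2)$, where $\pi\colon\widetilde{\mathbb A}^{n^2}\to\mathbb A^{n^2}$ is the Grothendieck simultaneous resolution and $\lambda_i$ reads off the $i$th eigenvalue along the flag. Taking the stalk at $a$ via proper base change, $H^j(a)$ becomes a summand of $H^{j-n^2}\bigl(\pi^{-1}(a),\bigotimes_i\lambda_i^*\mathcal{K}\ell_2(n(n-1)/2)\bigr)$. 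On each connected component of the Springer-type fibre $\pi^{-1}(a)$ the functions $\lambda_i$ are constant, so this factorises as $H^{j-n^2}(\pi^{-1}(a),\mathbb Q_\ell)$ tensored with a product of stalks $(\mathcal{K}\ell_2)_{\lambda_i}$ and a Tate twist. Springer's purity theorem for fixed-point varieties in flag manifolds supplies the weight of the first factor; the Kloosterman stalk is pure of weight $1$ away from $0$ and of weight $0$ at $0$, so exactly $k$ of the $n$ tensor factors lose a unit of weight, and the total comes out to $j-k$. There is no recursion and no base case.

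Both of your obstacles are genuine and, as stated, leave the argument without a proof. For the base case, the forget-supports isomorphism is not a side issue: the divisor $\{\det x=0\}$ is highly singular, the pole order of $\tr(ax^{-1})$ jumps along its strata, and ``enough wild ramification'' must be verified strata by strata on a resolution; you give no such argument, and note that for invertible non-regular $a$ the cohomology is \emph{not} concentrated in one degree (this is Theorem~\ref{thmreg}), so there is no shortcut through vanishing. For the inductive step, everything rests on the ``contributions of strictly lower-dimensional strata of $Z$'' and the extra $H^2_c(\mathbb G_m)$ factors: you assert these are compatible with weight $-k$ but propose no mechanism to show it, and a recursion that only controls a main term cannot yield purity. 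Your closing remark that ``the Springer correspondence is needed to organise'' this uniformity is exactly the right instinct---but you invoke it only as a slogan, whereas the paper makes Springer's purity theorem the actual engine of the argument and thereby sidesteps both the wild-ramification analysis and the recursion altogether.
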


Straightforward consequences of the purity are the following:
\begin{enumerate}
\item There can not be cancellation between the Frobenius eigenvalues of different degrees.
\item Knowing the value of $K(a,\mathbb{F}_{q^m})$ for all finite extension $\mathbb{F}_{q^m}|\mathbb{F}_q$ is equivalent to knowing the Frobenius eigenvalues of $H^*(a)$.
\end{enumerate}

It can also be explicitly characterized when $H^*(a)$ is concentrated in the middle degree (and thus $K(a)$ admits square root cancellation):

\begin{theorem}\label{thmreg}
The cohomology complex $H^*(a)$ is concentrated to the middle degree if and only if for each nonzero eigenvalue $\alpha$ of $a$, the eigenspace is 1-dimensional.
\end{theorem}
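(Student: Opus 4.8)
The plan is to deduce Theorem~\ref{thmreg} from the structural description of $H^*(a)$ that is built up in the course of proving Theorem~\ref{thmpurity}. First I would record the geometry: with $f_a(x)=\tr(ax^{-1}+x)$ one has $H^*(a)=R\Gamma_c\bigl(GL_{n,\overline{\mathbb{F}_q}},\mathcal L_\psi(f_a)\bigr)$, and the substitution $u=x$, $v=x^{-1}a$ exhibits $H^*(a)$ as the stalk at $a$ of the multiplicative self-convolution $Rm_!\bigl(\mathcal L_\psi(\tr)|_{GL_n}\boxtimes\mathcal L_\psi(\tr)|_{GL_n}\bigr)$ along $m\colon GL_n\times GL_n\to GL_n$; equivalently --- and this is the form I would use --- $a\mapsto H^*(a)$ is the fibre of a single constructible complex $\mathcal K$ on all of $M_n$, namely the $Rp_!$-pushforward of $\mathcal L_\psi(\tr(ax^{-1}+x))$ along the first projection $p\colon M_n\times GL_n\to M_n$. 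Since $\tr$ is a class function, $\mathcal K$ is equivariant for the conjugation action of $GL_n$ on $M_n$.

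The key input I would import from the work behind Theorem~\ref{thmpurity} is that $\mathcal K$ is, in the derived category, a direct sum of shifts and Tate twists of intersection cohomology complexes $\mathrm{IC}(\overline Y_i,\mathcal L_i)$ attached to pure local systems $\mathcal L_i$ on smooth, conjugation-stable, locally closed subvarieties $Y_i\subseteq M_n$, together with an explicit determination of this list. The mechanism behind such a description is the Springer correspondence: after an $\ell$-adic Fourier transform on $\mathfrak{gl}_n$ the convolution above is controlled by (two copies of) the Grothendieck--Springer sheaf, whose decomposition $\bigoplus_\lambda\mathrm{IC}(\overline{\mathcal O_\lambda})\otimes V_\lambda$ into nilpotent-orbit intersection complexes with Specht-module multiplicity spaces is classical; transporting this through the Fourier transform and the passages $\mathfrak{gl}_n\rightsquigarrow GL_n\rightsquigarrow M_n$ produces the strata $Y_i$ and the local systems $\mathcal L_i$. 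Granting such a list, Theorem~\ref{thmreg} follows from three observations. (i) A direct sum in the derived category restricts to stalks, so there is no cancellation: $H^*(a)=\mathcal K_a$ is concentrated in degree $n^2$ if and only if, for every $i$ with $a\in\overline Y_i$, the stalk of $\mathrm{IC}(\overline Y_i,\mathcal L_i)$ at $a$ is concentrated in a single degree. (ii) On its own open stratum $Y_i$ each such intersection complex is a shifted local system, hence automatically in one degree; so the only way concentration can fail at $a$ is if $a$ lies in some $\overline Y_i$ at a point where $\overline Y_i$ is not rationally smooth or where $\mathcal L_i$ has non-trivial local monodromy. (iii) Reading off the explicit list, the union over all $i$ of these ``bad'' loci is exactly the set of matrices having a nonzero eigenvalue whose eigenspace is at least $2$-dimensional, i.e.\ the complement of the locus in the statement; this last step is a finite computation once the list is in hand.

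It is instructive --- and a useful check on the list --- to see the geometry through the critical points of $f_a$. From $df_a(x)\colon h\mapsto\tr\bigl((1-x^{-1}ax^{-1})h\bigr)$ the critical locus of $f_a$ on $GL_n$ is $\{x\in GL_n:x^2=a\}$, the set of square roots of $a$ inside $GL_n$. If $a$ is invertible, each square root respects the generalized eigenspace decomposition of $a$ and has the shape $\bigoplus_\alpha s_\alpha v_\alpha$, with $v_\alpha$ the unique unipotent square root of the unipotent part and $s_\alpha$ a semisimple element of the centralizer satisfying $s_\alpha^2=\alpha\cdot\id$; this set is finite precisely when $a$ is regular, and then $f_a$ is Morse --- the Hessian at $x_0$ is $h\mapsto 2\tr(x_0^{-1}h^2)$, nondegenerate because $x_0$ has no two eigenvalues summing to $0$. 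When $a$ is invertible but not regular, the non-central choices of $s_\alpha$ together with the non-uniqueness of the eigenspace decomposition sweep out a positive-dimensional, necessarily degenerate, critical locus. For invertible $a$ the sheaf $\mathcal L_\psi(f_a)$ is totally wild along $\det x=0$ (because $\tr\bigl(a\,\mathrm{adj}(x)\bigr)$ is generically non-zero there), so $R\Gamma_c$ has no contribution from the boundary and is computed by the critical locus alone: $\ell$-adic stationary phase then gives concentration in the Morse case, while in the non-regular case the cohomology localizes to a non-trivial union of positive-dimensional conjugacy-class-type varieties and so spreads over several degrees. If $0$ is an eigenvalue of $a$ the critical locus is empty --- a square root of a singular matrix is singular --- so $H^*(a)$ is entirely carried by the boundary $\det x=0$; this boundary contribution is again governed by the $\mathrm{IC}$-decomposition, and its concentration turns out to depend only on the non-nilpotent part of $a$, which is the origin of the phrase ``away from $0$''.

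The main obstacle is establishing the explicit $\mathrm{IC}$-decomposition of $\mathcal K$: carrying the Springer-theoretic computation through the Fourier transform and the two passages above, and --- the delicate part --- pinning down the supports $\overline Y_i$, their singular loci, and the local monodromy of the $\mathcal L_i$. A secondary difficulty is the last clause about the eigenvalue $0$: one must check that enlarging the nilpotent part of a matrix never creates a bad stratum through which a matrix that is regular away from $0$ passes, which comes down to the nilpotent-orbit closures actually occurring in $\mathcal K$ being rationally smooth along precisely the orbits that occur. Both points are exactly what the analysis behind Theorem~\ref{thmpurity} supplies, and Theorem~\ref{thmreg} is then read off as above.
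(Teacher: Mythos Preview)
Your proposal takes a route that is \emph{not} the one the paper follows, and the key step you defer to ``the work behind Theorem~\ref{thmpurity}'' is not actually supplied there. You assume that $\mathcal K$ admits an explicit decomposition $\bigoplus_i \mathrm{IC}(\overline{Y_i},\mathcal L_i)$ with several strata $Y_i$ whose closures and local systems you can inspect. But Theorem~\ref{thmsheaf} proves that $K[2n^2]$ is an \emph{irreducible} perverse sheaf: the list has length one, with support all of $\mathbb{A}^{n^2}$. Your step (iii), ``reading off the explicit list'', then collapses to asking directly where the stalk of this single IC sheaf sits in more than one degree, which is the original question. The Springer--Fourier mechanism you invoke would decompose the \emph{larger} complex $R\pi_*\bigotimes_i\lambda_i^*\mathcal K\ell_2$, and the paper only shows $K$ is a summand of it (Theorem~\ref{thmsummand}) without identifying which one; so neither route gives you the concrete stratum-by-stratum data your argument needs.

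The paper's actual proof is structurally different and splits cleanly into two halves. For the ``if'' direction it uses Corollary~\ref{corsummand}: $H^j(a)$ is a summand of $H^{j-n^2}\bigl(\pi^{-1}(a),\,\bigotimes_i\lambda_i^*\mathcal K\ell_2\bigr)$, and when $a$ is regular the Springer fiber $\pi^{-1}(a)$ is a finite set of points, forcing concentration in degree $n^2$. For the ``only if'' direction the argument is not geometric at all: the tensor decomposition $H^*(a)\cong\bigotimes_l H^*(j_l)\otimes(\cdots)$ reduces to the single-eigenvalue case, and then the explicit combinatorial formula of Theorem~\ref{thmJblock} (proved by a recursion on partitions) shows, via Corollary~\ref{corweights}, that whenever a nonzero eigenvalue has $\mu_1>1$ the sum carries Frobenius eigenvalues of at least two distinct weights. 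Purity (Theorem~\ref{thmpurity}) then converts ``two weights'' into ``two cohomological degrees''. Your stationary-phase sketch for invertible regular $a$ is morally aligned with the first half, but your claim that a positive-dimensional critical locus automatically ``spreads over several degrees'' is exactly the nontrivial point, and the paper establishes it by the explicit weight count rather than by any Morse-theoretic argument.
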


In the case of regular matrices one can just refer to the computations in Section 22 of \cite{Erdelyi-Toth}, but our methods put this in  a more geometric context.

For the proof of the general case an explicit description of $K(a)$ and $H^*(a)$ is given for matrices $a$ which have a unique eigenvalue $\alpha\in\mathbb{F}_q$ (compare with the recursion algorithm in Section 18 of \cite{Erdelyi-Toth}):

\begin{theorem}\label{thmJblock}
If $a$ has a unique eigenvalue $\alpha$ then let $\lambda, \bar\lambda$ be the Frobenius eigenvalues corresponding to the classical Kloosterman sum $K(\alpha)$, then
\[K(a)=(-1)^nq^{n(n-1)/2}\sum_{k=0}^n\#\left(W\leq\mathbb{F}_q^n|\dim(W)=k,aW=W\right)\lambda^k\bar\lambda^{n-k}.\]
\end{theorem}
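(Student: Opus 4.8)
The plan is to compute the class of the complex $H^{*}(a)$ in the Grothendieck group of $\mathrm{Frob}$-modules and read off $K(a)$ by the Grothendieck trace formula. Since $x\mapsto gxg^{-1}$ identifies $K(gag^{-1})$ with $K(a)$ and $H^{*}(gag^{-1})$ with $H^{*}(a)$, we may take $a=\alpha I_{n}+N$ with $N$ in nilpotent Jordan form. If $\alpha=0$ then $a=N$ is never surjective on a nonzero invariant subspace, so $\#\{W:\dim W=k,\ aW=W\}$ equals $1$ for $k=0$ and $0$ for $k\ge 1$, and since the degenerate ``Kloosterman cohomology'' $H^{1}(0)=H^{1}_{c}(\mathbb{G}_{m},\mathcal{L}_{\psi}(x))$ is one-dimensional with Frobenius eigenvalue $1$, the claimed identity reduces to $K(N)=(-1)^{n}q^{n(n-1)/2}$ for every nilpotent $N$; I would handle this separately, starting from $\sum_{y\in\mathrm{GL}_{n}}\psi(\tr y)=(-1)^{n}q^{n(n-1)/2}$ together with an inductive comparison of $K$ across nilpotent classes. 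So assume $\alpha\neq 0$. Then $a$ is invertible, $aW=W\iff NW\subseteq W$, and the variety $X_{k}=\{W\in\mathrm{Gr}(k,n):NW\subseteq W\}$ of $a$-stable $k$-subspaces is a Spaltenstein variety, hence paved by affine spaces; thus $H^{*}_{c}(X_{k})$ is of Tate type, concentrated in even degrees, and $\#X_{k}(\mathbb{F}_{q^{m}})=\tr(\mathrm{Frob}^{m}\mid H^{*}_{c}(X_{k}))$. It then suffices to prove
\[
\sum_{i}(-1)^{i}[H^{i}(a)]=(-1)^{n}q^{n(n-1)/2}\sum_{k=0}^{n}[H^{*}_{c}(X_{k})]\cdot[M_{k}]
\]
in the Grothendieck group, where $M_{k}$ is the one-dimensional $\mathrm{Frob}$-module with eigenvalue $\lambda^{k}\bar\lambda^{n-k}$ (equivalently $\bigoplus_{k}M_{k}\cong\mathrm{Sym}^{n}H^{1}(\alpha)$, with $H^{1}(\alpha)=H^{1}_{c}(\mathbb{G}_{m},\mathcal{L}_{\psi}(\alpha x^{-1}+x))$ carrying the eigenvalues $\lambda,\bar\lambda$, $\lambda\bar\lambda=q$).

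To prove the displayed identity I would exploit the reformulation in which $a\mapsto K(a)$ is the Fourier transform over $M_{n}$ (with respect to the trace pairing) of the function $x\mapsto\psi(\tr x^{-1})$ on $\mathrm{GL}_{n}$ extended by zero; correspondingly $H^{*}(a)$ is the stalk at $a$ of $\mathrm{FT}(j_{!}\mathcal{L}_{\psi}(\tr x^{-1}))$, where $j\colon\mathrm{GL}_{n}\hookrightarrow M_{n}$, and for $n=1$ this is exactly the Kloosterman sheaf. For general $n$ the plan is to describe this ``matrix Kloosterman sheaf'', near the locus of matrices with a single eigenvalue $\alpha$, by a Springer-type construction: up to the decomposition theorem it should be built from the intersection-cohomology sheaves of the nilpotent orbit closures, each tensored with a local system one wants to identify with the appropriate power of the classical Kloosterman sheaf, so that taking the stalk at $a$ produces the sum over the $X_{k}$ with eigenvalues $\lambda^{k}\bar\lambda^{n-k}$ via the Springer correspondence. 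A more hands-on route to the same end, closer to \cite[\S18]{Erdelyi-Toth}, is a recursion in $n$: fixing an $a$-stable line $L_{0}$ and using the parabolic $P=\mathrm{Stab}(L_{0})$ (Levi $\mathbb{G}_{m}\times\mathrm{GL}_{n-1}$, unipotent radical $\mathbb{A}^{n-1}$), compute $H^{*}(a)$ along $\mathrm{GL}_{n}\to\mathrm{GL}_{n}/P$; the phase $\tr(ax^{-1}+x)$ degenerates into a classical Kloosterman phase on the $\mathbb{G}_{m}$-factor, an affine-linear phase on the radical whose linear part vanishes exactly over the $a$-stable locus, and the matrix Kloosterman phase for the operator induced on $V/L_{0}$; iterating assembles $H^{*}(a)$ from $n$ copies of $H^{1}(\alpha)$ over the variety of $a$-stable flags, which the Springer combinatorics then reorganizes into the displayed form.

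Granting the identity, the trace formula gives $K(a)=(-1)^{n}q^{n(n-1)/2}\sum_{k}\#X_{k}(\mathbb{F}_{q})\,\lambda^{k}\bar\lambda^{n-k}$, which is the claim. The main obstacle is the reorganization step in either route: one must show the relevant complex genuinely decomposes, with the correct multiplicities $\#X_{k}$ rather than the gross overcount produced by a naive iteration or tensor product — this is where the semisimplicity in the decomposition theorem and the precise Springer combinatorics (Green functions, the Springer correspondence for $\mathrm{GL}_{n}$) are indispensable, and where one checks that the connecting maps in the recursion vanish. A second, independent difficulty is pinning the Frobenius eigenvalue on the $X_{k}$-piece to be exactly $\lambda^{k}\bar\lambda^{n-k}$: this does not follow from the formal input ``rank two, pure of weight one'', but uses the actual local monodromy of the Kloosterman sheaf (wildly ramified at $0$, tame unipotent at $\infty$), so that each step contributes a clean factor $\lambda$ or $\bar\lambda$; the residual case $\alpha=0$ needs the analogous, easier bookkeeping for $\mathcal{L}_{\psi}(x)$ on $\mathbb{G}_{m}$.
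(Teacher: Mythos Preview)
Your proposal is a plan, not a proof: the two obstacles you name at the end --- showing the complex decomposes with the correct multiplicities $\#X_k$, and pinning the Frobenius eigenvalue on each piece to $\lambda^k\bar\lambda^{n-k}$ --- are the entire content of the theorem, and you do not resolve either one. Neither the decomposition-theorem route nor the parabolic-recursion route is actually carried out; ``Granting the identity'' grants everything.

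The paper bypasses both obstacles with an elementary combinatorial argument at the level of the sums rather than the cohomology. It imports the recursion
\[
K^{\mu}=q^{n-1}K^{[1]}K^{\mu'}-q^{2n+\mu_r-3}K^{\mu''}+(q^{\mu_r-1}-1)q^{2n-2}K^{\mu'''}
\]
from \cite[Theorem~18.1]{Erdelyi-Toth} (reindexed by the partition $\mu$ dual to the Jordan type), proves a closed product formula $V(\mu,\nu)=\prod_j\binom{\mu_j-\nu_{j+1}}{\nu_j-\nu_{j+1}}_q q^{\nu_{j+1}(\mu_j-\nu_j)}$ for the number of $a$-invariant subspaces of filtration type $\nu$, and verifies by a direct $q$-binomial identity that the coefficients $\sum_{\nu\vdash k} V(\mu,\nu)$ satisfy the matching recursion; both sides then agree by induction from $K^{[\;]}=1$ and $K^{[1]}=-(\lambda+\bar\lambda)$. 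Here $\lambda,\bar\lambda$ enter only through the relations $\lambda+\bar\lambda=-K^{[1]}$ and $\lambda\bar\lambda=q$, so the eigenvalue-identification problem you flag never arises; nor does any sheaf theory, Springer correspondence, or decomposition theorem. Your geometric picture is exactly the heuristic the paper sketches in the introduction as motivation for the statement, but the paper is explicit that the rigorous proof goes through the elementary recursion and that the geometric description is confirmed only a posteriori by matching the answer.
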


\vspace{1em}

In order to do prove these, first a complex of sheaves is constructed on $\mathbb{A}^{n^2}$, whose fiber over the matrix $a$ is the cohomology group $H^*(a)$.

This can be constructed as \[ K = R pr_{1!} \mathcal L_\psi ( \tr ( a x^{-1} +x ))\] where $pr_1: \mathbb A^{n^2} \times GL_n \to \mathbb A^{n^2}$ is the projection.

\begin{theorem}\label{thmsheaf}
The shifted complex $K[2 n^2]$ is an irreducible pure perverse sheaf.
\end{theorem}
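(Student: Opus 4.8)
The plan is to recognise $K$ as an $\ell$-adic Fourier transform, and then to read off perversity, irreducibility and purity from the corresponding properties of a single Artin--Schreier sheaf on $\mathrm{GL}_n$ together with the standard formalism of the Fourier--Deligne transform. First I would rewrite the exponential sum: substituting $x\mapsto x^{-1}$, an automorphism of $\mathrm{GL}_n$, turns $\tr(ax^{-1}+x)$ into $\tr(ax+x^{-1})=\langle a,x\rangle+\tr(x^{-1})$, where $\langle u,v\rangle=\tr(uv)$ is the perfect symmetric pairing on $M_n=\mathbb{A}^{n^2}$. Writing $j\colon\mathrm{GL}_n\hookrightarrow M_n$ for the open immersion --- which is affine, being the nonvanishing locus of the single function $\det$ --- and $\mathcal{G}=\mathcal{L}_\psi(\tr(x^{-1}))[n^2]$, the change of variables yields a canonical isomorphism
\[ K[2n^2]\;\cong\;\mathrm{FT}_\psi\bigl(j_!\mathcal{G}\bigr),\qquad \mathrm{FT}_\psi(\mathcal{M})=R\,pr_{2!}\bigl(pr_1^*\mathcal{M}\otimes\mathcal{L}_\psi(\langle\cdot,\cdot\rangle)\bigr)[n^2], \]
the pairing being used to identify $\mathbb{A}^{n^2}$ with its dual. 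So it is enough to show that $j_!\mathcal{G}$ is an irreducible pure perverse sheaf.

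For that, note that $\mathcal{L}_\psi(\tr(x^{-1}))$ is a rank-one lisse sheaf on $\mathrm{GL}_n$, pure of weight $0$, so $\mathcal{G}$ is a simple perverse sheaf, pure of weight $n^2$. Since $j$ is an affine open immersion, $j_!$ and $Rj_*$ are $t$-exact for the perverse $t$-structure, so $j_!\mathcal{G}$ and $Rj_*\mathcal{G}$ are perverse and the canonical ``forget supports'' morphism $j_!\mathcal{G}\to Rj_*\mathcal{G}$ has a perverse kernel $\mathcal{K}$; applying $j^*$ turns this morphism into $\id_{\mathcal{G}}$, so $\mathcal{K}$ is supported on the hypersurface $Z=\{\det=0\}$. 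Now $\mathcal{K}$, being a perverse sheaf supported on a variety of dimension $n^2-1$, lies in $D^{\ge -(n^2-1)}$; on the other hand $j_!\mathcal{G}=(j_!\mathcal{L}_\psi(\tr(x^{-1})))[n^2]$ is a single sheaf placed in cohomological degree $-n^2$, hence lies in $D^{\le -n^2}$. Since there are no nonzero morphisms in the derived category from an object of $D^{\ge -n^2+1}$ to an object of $D^{\le -n^2}$, the inclusion $\mathcal{K}\hookrightarrow j_!\mathcal{G}$ is zero and $\mathcal{K}=0$. Therefore $j_!\mathcal{G}\hookrightarrow Rj_*\mathcal{G}$ coincides with the intermediate extension $j_{!*}\mathcal{G}$; as the intermediate extension of a simple pure perverse sheaf it is itself simple and, by Gabber's theorem, pure of weight $n^2$.

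It remains to transport this back. The $\ell$-adic Fourier transform $\mathrm{FT}_\psi$ is an exact autoequivalence of the category of perverse sheaves on $\mathbb{A}^{n^2}$ (Katz--Laumon, Laumon), so it sends the simple object $j_!\mathcal{G}$ to a simple perverse sheaf; and it takes a pure perverse sheaf of weight $w$ to a pure perverse sheaf of weight $w+n^2$ (Katz--Laumon; see also Kiehl--Weissauer). By the isomorphism of the first paragraph, $K[2n^2]=\mathrm{FT}_\psi(j_!\mathcal{G})$ is therefore an irreducible perverse sheaf, pure of weight $2n^2$, which is the claim.

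The substance of the argument lies entirely in the first observation, that $K$ is a Fourier transform; after that everything is formal modulo the (nontrivial but standard) facts that $\mathrm{FT}_\psi$ preserves perversity and purity. The one extra point, that $j_!\mathcal{L}_\psi(\tr(x^{-1}))$ is already its own intermediate extension, might at first sight seem to require analysing the wild ramification of $\tr(x^{-1})$ along $\det=0$ and the singular locus of that divisor, but the soft support-dimension estimate above avoids any such local computation. The step I would phrase most carefully is the invocation of the purity-preservation of $\mathrm{FT}_\psi$, together with the bookkeeping of shifts and weight normalisations.
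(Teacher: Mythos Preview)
Your proof is correct and is genuinely different from the paper's. Both arguments identify $K[2n^2]$ as a Fourier transform, but of different objects on different spaces. The paper embeds $\mathrm{GL}_n$ as the \emph{closed} subvariety $\{xy=\id\}\subset\mathbb A^{2n^2}$, Fourier-transforms the constant sheaf (for which $i_*\mathbb Q_\ell[n^2]$ is trivially simple perverse pure), and obtains a perverse sheaf on $\mathbb A^{n^2}\times\mathbb A^{n^2}$ in the variables $(a,b)$; it must then restrict to $b\in\mathrm{GL}_n$, change variables, and descend along the smooth surjection $m\colon(a,b)\mapsto ba$ using \cite[4.2.5]{bbd}. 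You instead embed $\mathrm{GL}_n$ as the \emph{open} subvariety of $\mathbb A^{n^2}$ and Fourier-transform $j_!\mathcal L_\psi(\tr(x^{-1}))[n^2]$ directly; the price is that you must show $j_!\mathcal G=j_{!*}\mathcal G$, which you do by the clean support--amplitude argument (the kernel is perverse on a variety of dimension $n^2-1$, hence in $D^{\ge -(n^2-1)}$, while $j_!\mathcal G$ sits in the single degree $-n^2$). Your route is shorter and yields the weight $2n^2$ explicitly, while the paper's route sidesteps any analysis of the open extension at the cost of the extra descent step; both ultimately rest on the same Fourier formalism, and the paper could equally have concluded purity via the weight-shift of $\mathrm{FT}_\psi$ rather than invoking \cite[5.3.4]{bbd} at the end.
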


Let $U \subset \mathbb A^{n^2}$ be the regular semisimple locus, and let $j \colon U \to \mathbb A^{n^2}$ be the open immersion. Let $\widetilde{\mathbb A}^{n^2}$ be the space parameterizing pairs of a matrix $a \in \mathbb A^{n^2}$ and a complete flag $F$ of linear subspaces $F^i$ of $\mathbb A^n$ such that $a$ preserves each of the subspaces $F^i$. Let $\pi \colon \widetilde{\mathbb A}^{n^2}\to \mathbb A^{n^2}$ be the map forgetting the flag $F$ (Grothendieck's simultaneous resolution).

Let $\widetilde{U}$ be the inverse image of $U$ under $\pi$, let $u \colon \widetilde{U} \to \widetilde{\mathbb A}^{n^2}$ be the open immersion, and let $\rho \colon \widetilde{U} \to U$ be the projection. Note that $\widetilde{U}$ parameterizes pairs $(a, F)$ where $a$ preserves $F$ and $a$ is regular semisimple. In this case, $a$ has $n$ one-dimensional eigenspaces for its $n$ distinct eigenvalues, and $F^i$ is invariant if and only if it is a sum of these eigenspaces, so choices of a complete flag are in one-to-one bijections with orderings of the eigenvalues.

Finally, let $\lambda_i \colon \widetilde{\mathbb A}^{n^2}\to \mathbb A^1$ send a pair $(a,F)$ to the unique eigenvalue of $a$ acting on the one-dimensional space $F^i / F^{i-1}$. Thus, $\lambda_1 (a,F) \dots, \lambda_n(a,F)$ are the eigenvalues of $a$ with multiplicity.

By Grothendieck's simultaneous resolution the argument of \cite{Erdelyi-Toth} for regular semisimple elements can be translated to the fact that $K$ is a summand of a complex of sheaves related to a tensor product of hyper-Kloosterman sheaves:

\begin{theorem}\label{thmsummand}
$K$ is a summand of \[R\pi_*   \bigotimes_{i=1}^n \lambda_i^* \mathcal{K}\ell_2   [-n^2] (n (n-1)/2),\]
where $\mathcal{K}\ell_2$ is the classical hyper-Kloosterman sheaf on $\mathbb A^1$ defined by Katz (\cite{Katz-monodromy}).
\end{theorem}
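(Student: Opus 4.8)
The plan is to establish the statement first over the dense open locus $V^\circ\subseteq\mathbb A^{n^2}$ of matrices that are regular semisimple \emph{and} invertible, where it amounts to recasting the regular semisimple computation of \cite{Erdelyi-Toth} in sheaf-theoretic language, and then to propagate it to all of $\mathbb A^{n^2}$ by combining the decomposition theorem with the fact, from Theorem~\ref{thmsheaf}, that $K[2n^2]$ is a \emph{simple} perverse sheaf.

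First I would treat $V^\circ$. Over $V^\circ$ the morphism $\pi$ restricts to a finite \'etale Galois cover $\rho^\circ\colon\widetilde{V^\circ}\to V^\circ$ with group $S_n$: a point of $\widetilde{V^\circ}$ is a regular semisimple invertible $a$ together with an ordering $\alpha_1,\dots,\alpha_n$ of its distinct nonzero eigenvalues, and $\lambda_i(a,F)=\alpha_i\in\mathbb G_m$. Hence $\bigotimes_{i=1}^n\lambda_i^*\mathcal{K}\ell_2$ restricts on $\widetilde{V^\circ}$ to a lisse sheaf of rank $2^n$ (placed in degree $0$) carrying the tautological $S_n$-equivariant structure — permuting the tensor factors as $S_n$ permutes the ordering — so $R\rho^\circ_*\bigotimes_i\lambda_i^*\mathcal{K}\ell_2$ is a lisse $S_n$-sheaf on $V^\circ$ and splits into isotypic components. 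Now I would invoke \cite[\S 22]{Erdelyi-Toth}: diagonalising and conjugating $a$, and then performing the change of variables used there, turns $\sum_{x\in GL_n}\psi(\tr(ax^{-1}+x))$ into a product of classical Kloosterman sums in the eigenvalues, and on the cohomological level the corresponding K\"unneth isomorphism identifies $H^{n^2}(a)$ with $q^{n(n-1)/2}\otimes\bigotimes_{\alpha}H^1_c(\mathbb G_m,\mathcal L_\psi(\alpha x^{-1}+x))$, the product running over the eigenvalues $\alpha$ of $a$. Reading $H^1_c(\mathbb G_m,\mathcal L_\psi(\alpha x^{-1}+x))$ as the stalk $\mathcal{K}\ell_{2,\alpha}$ and the unordered tensor product over the eigenvalues as a one-dimensional isotypic component of $R\rho^\circ_*\bigotimes_i\lambda_i^*\mathcal{K}\ell_2$ (essentially the $S_n$-invariant part, up to a sign-character twist) turns this into an isomorphism of $K|_{V^\circ}$ with that summand, up to the shift $[-n^2]$ and the Tate twist $(n(n-1)/2)$, the twist recording precisely the scalar $q^{n(n-1)/2}$.

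Next I would check that $\bigotimes_{i=1}^n\lambda_i^*\mathcal{K}\ell_2$, shifted by $n^2=\dim\widetilde{\mathbb A}^{n^2}$, is a pure perverse sheaf on $\widetilde{\mathbb A}^{n^2}$. Writing $\lambda=(\lambda_1,\dots,\lambda_n)\colon\widetilde{\mathbb A}^{n^2}\to\mathbb A^n$ and $\widetilde{\mathcal{K}\ell_2}$ for the middle extension of $\mathcal{K}\ell_2$ to $\mathbb A^1$, one has $\bigotimes_i\lambda_i^*\mathcal{K}\ell_2=\lambda^*\!\big(\widetilde{\mathcal{K}\ell_2}\boxtimes\cdots\boxtimes\widetilde{\mathcal{K}\ell_2}\big)$. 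Since $\mathcal{K}\ell_2$ is lisse of rank $2$ and pure of weight $1$ on $\mathbb G_m$, tame with a single unipotent Jordan block at $0$ (\cite{Katz-monodromy}), $\widetilde{\mathcal{K}\ell_2}[1]$ is an irreducible perverse sheaf on $\mathbb A^1$ pure of weight $2$ and its $n$-fold external product is perverse pure of weight $2n$ on $\mathbb A^n$; and $\lambda$ is smooth of relative dimension $n(n-1)$ — identifying $\widetilde{\mathbb A}^{n^2}$ with the total space of the universal Borel subalgebra over the flag variety, $\lambda$ extracts the diagonal part, a linear surjection of affine spaces over each flag — so smooth pullback with the shift by the relative dimension keeps it perverse and pure, of weight $2n+n(n-1)=n^2+n$. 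Then, $\pi$ being projective, the decomposition theorem expresses $R\pi_*\big(\bigotimes_i\lambda_i^*\mathcal{K}\ell_2\,[n^2]\big)$ as a direct sum of shifts of intersection complexes of irreducible local systems, all pure; its restriction to $V^\circ$ is the semisimple perverse sheaf $R\rho^\circ_*\big(\bigotimes_i\lambda_i^*\mathcal{K}\ell_2|_{\widetilde{V^\circ}}\big)[n^2]$, so the full-support unshifted summands are exactly the $IC_{\mathbb A^{n^2}}$ of the irreducible local systems occurring in the latter. By Theorem~\ref{thmsheaf} the simple perverse sheaf $K[2n^2]$ equals $IC_{\mathbb A^{n^2}}$ of its restriction $\mathcal F=\mathcal H^{n^2}(K)|_{V^\circ}$, a nonzero simple local system, and by the previous paragraph $\mathcal F$ — up to a Tate twist — is one of these summands; hence $K[2n^2]$, up to that twist, is a direct summand of $R\pi_*\big(\bigotimes_i\lambda_i^*\mathcal{K}\ell_2\,[n^2]\big)$. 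Undoing the shifts and fixing the twist by a weight count (both $K$ and $R\pi_*\bigotimes_i\lambda_i^*\mathcal{K}\ell_2\,[-n^2](n(n-1)/2)$ are pure of weight $0$, using properness of $\pi$ and purity of $\mathcal{K}\ell_2$) yields that $K$ is a direct summand of $R\pi_*\bigotimes_{i=1}^n\lambda_i^*\mathcal{K}\ell_2\,[-n^2]\,(n(n-1)/2)$.

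The hard part is the families statement inside the first step: upgrading the fiberwise computation of \cite{Erdelyi-Toth} to an isomorphism of lisse sheaves over the moving base $V^\circ$, compatibly with the $S_n$-equivariant structure and with the Tate twist. Concretely one must redo the change of variables of \cite{Erdelyi-Toth} relatively — after pulling back to $\widetilde{V^\circ}$, conjugating the universal matrix to diagonal form and reducing $R(pr_1)_!\,\mathcal L_\psi(\tr(ax^{-1}+x))$ to an external product over $V^\circ$ so that the K\"unneth formula applies — rather than pointwise. Everything else (the perversity and purity above, the decomposition theorem, and the intersection-complex bookkeeping) is formal.
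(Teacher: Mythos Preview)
Your overall architecture matches the paper's: identify $K$ with a summand over the regular semisimple locus and then propagate by an intermediate-extension argument, using Theorem~\ref{thmsheaf} for the simplicity of $K[2n^2]$ and purity/properness of $\pi$ for the decomposition on the other side. The difference is in what you flag as ``the hard part.'' You propose to redo the change of variables of \cite{Erdelyi-Toth} \emph{relatively} over $\widetilde{V^\circ}$ in order to get an actual isomorphism of lisse sheaves; the paper avoids this entirely. It works upstairs on $\widetilde U$, observes that both $\rho^*j^*K[2n^2]$ and $u^*\bigotimes_i\lambda_i^*\mathcal{K}\ell_2[n^2]$ (suitably twisted) are perverse with the latter \emph{geometrically irreducible} (external tensor product of irreducibles pulled back along a smooth map), and then invokes a Chebotarev-type lemma: two perverse sheaves with equal Frobenius trace functions over every finite extension, one of them irreducible, must be isomorphic. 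The required trace identity is literally the pointwise formula $K(a)=q^{n(n-1)/2}\prod_iK(\lambda_i)$ of \cite[Theorem~1.1(2)]{Erdelyi-Toth}, so no families upgrade is needed. From there the paper uses that $\rho$ is finite \'etale (so $j^*K$ is a summand of $\rho_*\rho^*j^*K$) and a short lemma---for $K_1$ irreducible pure and $K_2$ pure, $j^*K_1$ a summand of $j^*K_2$ forces $K_1$ a summand of $K_2$---in place of your direct appeal to the decomposition theorem; these are equivalent packaging. So your plan is sound, but the relative K\"unneth computation you isolate as the crux is a detour: the paper's trace-function/irreducibility trick makes it unnecessary.
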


Then Theorem \ref{thmpurity} is a direct consequence of  Springer's result on the cohomology of the Springer fiber \cite{Springer2}.

When $a$ is regular, then $\pi^{-1}(a)$ consists only points, which allows us to prove Theorem \ref{thmreg} in this case.

\vspace{1em}

One can motivate Theorem \ref{thmJblock} by relating the geometry of $K$ to the Springer correspondence:

If the classical Kloosterman sum $K(\alpha)$ has Frobenius eigenvalues $\lambda$ and $\bar\lambda$, then an $n$-fold tensor product of Kloosterman sheaves splits into one-dimensional summands with eigenvalues $\lambda^k$ and $\bar\lambda^{n-k}$. The action of the Weyl group $S_n$ permutes the terms and $S_k\times S_{n-k}\leq S_n$ shows up as the stabilizer of an eigenvector. This suggests that the invariants of $ S_k\times S_{n-k}$ on the cohomology are given by the cohomology of the space of $k$-dimensional subspaces fixed by $a$, and that leads to the formula in Theorem \ref{thmJblock}.

This is proven using elementary arguments and a recursion formula for the sum \(K(a)\), (\cite{Erdelyi-Toth}, Theorem 18.1.), which also justifies the above geometric description, as it is the only one that matches the sum in Theorem \ref{thmJblock}.

The computation of some weights allows to conclude Theorem \ref{thmreg} for non-regular matrices.

\vspace{1em}

One may be naturally inclined to speculate how the results generalize to reductive subgroups of \(GL_n\). We will only comment on the question of purity, for which understanding the generic situation as in Theorem~\ref{thmsummand} is crucial. This result seems to be closely tied up with the underlying standard representation of the group \(GL_n\). Other representations will definitely lead to more elaborate structures. To illustrate  consider the subgroup \(H=Sym^2(GL_2)\) of \(GL_3\) and  \(K_H(a)=\sum_{x\in H}\psi(ah+h^{-1}) \). 
Then a direct calculation shows that the associated sheaf cohomology is not pure, even for \(a\in H\) regular semisimple.

%
%

\vspace{1em}

A brief description of the paper follows. Our argument naturally splits to two parts:

In Section \ref{sgeom} the Theorems \ref{thmsheaf}, \ref{thmsummand} and \ref{thmpurity} are proven by geometric arguments.

In Section \ref{selm} Theorem \ref{thmJblock} is proven by combinatorial calculations and then the remaining part of Theorem \ref{thmreg}.

{\em Acknowledgments.} 

Erd\'elyi was supported by NKFIH Research Grants FK-127906 and K-135885.

Sawin was supported by NSF grant DMS-2101491.

Toth was supported by by the R\'enyi Institute Lend\"ulet Automorphic Research Group, and by NKFIH Research Grants K 135885.

\section{Geometric arguments}\label{sgeom}

\begin{proof}[Proof of Theorem \ref{thmsheaf}] Consider first the space $\mathbb A^{n^2} \times \mathbb A^{n^2}$ parameterizing pairs of $n \times n$ matrices $(x,y)$. On this space, we have the closed subset $Z$ given by the equation $xy=\id$, which is isomorphic to $GL_n$ under the isomorphism sending $(x, x^{-1})$ to $x$.  Let $i \colon GL_n  \to \mathbb A^{n^2} \times \mathbb A^{n^2}$ be the closed immersion. Then since $GL_n$ is smooth and irreducible of dimension $n^2$, $\mathbb Q_\ell[n^2]$ is an irreducible perverse sheaf on $GL_n$. Then since $i$ is a closed immersion, $i_* \mathbb Q_\ell[n^2]$ is an irreducible perverse sheaf on $\mathbb A^{n^2} \times \mathbb A^{n^2}$.

The Fourier transform is known to preserve irreducible perverse sheaves (perversity from \cite[Corollary 2.1.5(ii)]{Katz-Laumon} and irreducibility follows trivially from \cite[III, Theorem 8.1(3)]{Kiehl-Weissauer}). Thus, the Fourier transform of $i_* \mathbb Q_\ell[n^2]$ is an irreducible perverse sheaf. Next, let us calculate this Fourier transform.

By definition \cite[Definition 2.1.1]{Katz-Laumon}, the Fourier transform of a complex $L$ on $\mathbb A^{N}$ is given by  $ R pr_{l!} ( pr_r^* L \otimes \mathcal L_\psi ( \mu))[N]$ where $pr_l, pr_r$ are the two projections $\mathbb A^N \times \mathbb A^N  \to \mathbb A^N$ and $\mu$ is a nondegenerate bilinear form on $\mathbb A^N \times \mathbb A^N $. We can specialize this formula for $N = 2n^2$, viewing each copy of $\mathbb A^N$ as $\mathbb A^{n^2} \times \mathbb A^{n^2}$. A suitable bilinear takes two pairs of matrices $((a,b),(x,y))$ to $\tr( a y + bx)$.  Having done this, the Fourier transform of $i_* \mathbb Q_\ell[n^2]$ is
\[  R  pr_{12!} ( pr^{34*} i_* \mathbb Q_\ell [ n^2] \otimes \mathcal L_\psi (  \tr( a y + bx) )) [ 2n^2]\]
Here $pr_{12}$ and $pr_{34}$ are the maps from $(\mathbb A^{n^2})^4 $ to $(\mathbb A^{n^2})^2$ given by projection onto the first two and last two factors respectively.

Since $i$ is a closed immersion, $i_* = i_!$. By proper base change \cite[XVII, Proposition 5.2.8]{sga4-3}, $pr^{34*} i_! \mathbb Q_\ell = ( (id) \times i)_! \mathbb Q_\ell$ where $(id) \times i \colon  \mathbb A^{n^2} \times \mathbb A^{n^2} \times GL_n \to (\mathbb A^{n^2})^4$ sends $(a,b,x)$ to $(a,b,x,x^{-1})$.  By the projection formula \cite[XVII, Proposition 5.2.9]{sga4-3},
\begin{align*}
( (id) \times i)_! \mathbb Q_\ell \otimes \mathcal L_\psi (  \tr( a y + bx) )) = ((id)\times i)_!  (id \times i)^* \mathcal L_\psi (  \tr( a y + bx) )) =\\
((id)\times i)_!   \mathcal L_\psi ( \tr ( ax^{-1} + bx))
\end{align*}

Thus, the Fourier transform of $i_* \mathbb Q_\ell[n^2]$ is 
\[  R  pr_{12!} ((id)\times i)_!   \mathcal L_\psi ( \tr ( ax^{-1} + bx)) [3n^2]\]
which by functoriality of compactly supported pushforward \cite[XVII, Theorem 5.1.8(a)]{sga4-3} is
\[R ( pr_{12} \circ (id)\times i))_! \mathcal L_\psi ( \tr ( ax^{-1} + bx)) [3n^2].\]

The composition $R ( pr_{12} \circ (id)\times i))_! $ may also be called $pr_{12}$ as it represents projection onto the first two factors $a,b$. So $R pr_{12!} \mathcal L_\psi( \tr( ax^{-1} + bx))[3n^2]$ is an irreducible perverse sheaf.

This irreducible sheaf remains irreducible when restricted to the open locus $\mathbb A^{n^2} \times GL_n$ where $b$ is invertible. (The only other possibility is that it becomes zero on this subset, which is easy to rule out by the calculations already in \cite{Erdelyi-Toth}.) On that locus, performing a change of variables substituting $b^{-1} x$ for $x$, we get
\[R pr_{12!} \mathcal L_\psi( \tr( ax^{-1}b + x))[3n^2] =R pr_{12!} \mathcal L_\psi( \tr( bax^{-1} + x))[3n^2]  \] which by proper base change \cite[XVII, Proposition 5.2.8]{sga4-3} is $m^* K[n^2]$ where  $m \colon  (a,b) \mapsto ba$ from $\mathbb A^{n^2} \times GL_n $ to  $GL_n$ is the multiplication map. Because $m$ is smooth of relative dimension $n^2$ with nonempty, geometrically connected fibers, the function $ L \mapsto m^* L [n^2]$ preserves the perverse $t$-structure and moreover is a fully faithful functor on perverse sheaves \cite[Proposition 4.2.5]{bbd}.

It follows that $K$ is an irreducible perverse sheaf: Assuming for contradiction that ${}^p\mathcal H^i(K)\neq 0$ for some $i\neq 0$ then since $m$ preserves the perverse $t$-structure and is fully faithful we have ${}^p \mathcal H^i( m^* K[n^2]) = m^* {}^p\mathcal H^i(K)[n^2]  \neq 0$, a contradiction, so  ${}^p\mathcal H^i(K)=0$ for all $i\neq 0$, i.e. $K$ is perverse, and assuming for contradiction that $K$ is reducible, so there is a proper nontrivial subobject $L$ of $K$, then $m^* L [n^2]$ would be a proper nontrivial subobject of $m^* K[n^2]$, another contradiction, thus $K$ is irreducible.

Since $K$ is mixed by \cite[Variant 6.2.3]{weil-ii}, it is pure by \cite[Corollary 5.3.4]{bbd}.
\end{proof}

We state some general facts about perverse sheaves that are relatively standard but may not appear in the literature in exactly the form we need them.

\begin{lemma}\label{trace-equals-criterion} Let $K_1$ and $K_2$ be two perverse sheaves on a variety $X$ over a finite field $\mathbb F_q$. Assume that for each extension $\mathbb F_{q^e}$ of $\mathbb F_q$, for each $x \in X( \mathbb F_{q^e})$, we have the equality of trace functions
\begin{equation}\label{trace-functions-equal} \sum_i (-1)^i \tr (\operatorname{Frob}_{q^e}, \mathcal H^i ( K_1)_{x}) =\sum_i (-1)^i \tr (\operatorname{Frob}_{q^e}, \mathcal H^i ( K_2)_{x})\end{equation}
and that $K_2$ is irreducible. Then $K_1 \cong K_2$. \end{lemma}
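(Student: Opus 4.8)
The plan is to reduce the statement to a comparison of classes in a Grothendieck group and then invoke Chebotarev-style density to upgrade equality of trace functions to isomorphism. First I would consider the ``difference'' $K_1$ versus $K_2$ in the Grothendieck group $K_0(\mathrm{Perv}(X))$ of perverse sheaves (equivalently, in $K_0$ of the constructible derived category), where the hypothesis \eqref{trace-functions-equal} says precisely that the classes of $K_1$ and $K_2$ have the same trace function at every point of every $\mathbb F_{q^e}$. By the Grothendieck trace formula read pointwise (not summed over $X$), a virtual object with vanishing trace function on all of $X(\overline{\mathbb F_q})$ has class zero in $K_0$; this is the standard consequence that trace functions of objects of the derived category, together with their twists by the classes of varieties, determine the class in $K_0$. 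Hence $[K_1] = [K_2]$ in $K_0(\mathrm{Perv}(X))$.

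Next I would use that $K_0(\mathrm{Perv}(X))$ is the free abelian group on isomorphism classes of simple perverse sheaves (since $\mathrm{Perv}(X)$ is an artinian and noetherian abelian category, every object has a finite Jordan--H\"older filtration, and distinct simples give linearly independent classes). Therefore $[K_1] = [K_2]$ forces $K_1$ and $K_2$ to have the same Jordan--H\"older constituents with multiplicity. Since $K_2$ is irreducible, $[K_2]$ is a single simple class with multiplicity one, so $K_1$ must be isomorphic to $K_2 \oplus (\text{something})$ at the level of constituents --- more precisely, $K_2$ occurs exactly once among the constituents of $K_1$ and there are no others, i.e.\ $K_1$ is an iterated extension whose only constituent is $K_2$ occurring once. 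An object with a single Jordan--H\"older constituent occurring with multiplicity one is isomorphic to that constituent, so $K_1 \cong K_2$.

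The one genuinely delicate point, and the step I expect to be the main obstacle, is justifying rigorously that vanishing of the alternating trace function at all closed points implies vanishing of the class in $K_0$. For this I would appeal to the Grothendieck--Lefschetz trace formula together with a separating argument: if $[K_1] - [K_2] = \sum_j n_j [S_j]$ with the $S_j$ distinct simples and not all $n_j$ zero, one restricts to a stratum $U \subset X$ on which finitely many of the $S_j$ restrict to (shifts of) lisse sheaves and are pairwise non-isomorphic there, then uses the \v{C}ebotarev density theorem (as in Laumon, or \cite[1.1.2]{Laumon} / Deligne's $\mathrm{Weil\ II}$) to conclude that the Frobenius traces of pairwise non-isomorphic irreducible lisse sheaves on $U$ are linearly independent as functions on $\bigsqcup_e U(\mathbb F_{q^e})$, contradicting the hypothesis. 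Care is needed because the $S_j$ may be supported on various subvarieties, so one inducts on $\dim(\mathrm{supp})$, peeling off the constituents supported on the largest stratum first; this is the routine-but-fiddly bookkeeping that I would spell out. Once that lemma is in place, the rest of the argument is formal.
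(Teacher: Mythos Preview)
The proposal is correct and follows essentially the same route as the paper: equality of trace functions is upgraded to equality in the Grothendieck group via a stratification into lisse pieces and the Chebotarev density theorem, and then the Jordan--H\"older theorem in the Artinian category of perverse sheaves forces $K_1\cong K_2$ since $K_2$ is simple. The paper organizes the first step slightly more directly (reducing at once to lisse sheaves on a common stratification and invoking character theory, rather than writing the difference as a combination of simples and inducting on support dimension), but the ingredients and logic are the same.
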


\begin{proof}We first check that for bounded complexes $K_1$, $K_2$ of constructible $\ell$-adic sheaves satisfying the equality of trace functions \eqref{trace-functions-equal}, classes $\sum_i (-1)^i [\mathcal H^i(K_1)]$ and $\sum_i (-1)^i [\mathcal H^i(K_2)]$ in the Grothendieck group of constructible $\ell$-adic sheaves of $K_1$ and $K_2$ are equal. Since both the assumption and conclusion involve an alternating sum, we reduce immediately to the case that $K_1$ and $K_2$ are constructible $\ell$-adic sheaves. There exists a stratification of $X$ such that both $K_1$ and $K_2$ are lisse on each stratum, and since we can write each as the sum in the Grothendieck group of its restriction to each stratum, it suffices to handle the case where $K_1$ and $K_2$ are lisse. In this case, we are working in the Grothendieck group of representations of the fundamental group. By the Chebotarev density theorem, the equality of traces of Frobenius implies the equality of traces of every element of $\pi_1$. The result then follows from the fact, in character theory, that two representations with the same character are equal in the Grothendieck group of representations.

The class of a complex of perverse sheaves in the Grothendieck group is the alternating sum of the classes of its perverse homology sheaves. From this, one can see that $K_1$ and $K_2$ agree in the Grothendieck group of perverse sheaves. But, since the category of perverse sheaves is Artinian, the Grothendieck group of perverse sheaves is the free group on the isomorphism classes of irreducible perverse sheaves. It follows that, in the Jordan-H\"older decomposition of $K_1$ in the category of perverse sheaves, the only irreducible component is $K_2$, and it occurs with multiplicity $1$, i.e. $K_1$ is isomorphic to $K_2$.

\end{proof}

\begin{lemma}\label{direct-summand-criterion} Let $X$ be a variety over a finite field, $U$ an open set of $X$, and $j \colon U \to X$ an open immersion. Let $K_1$ be an irreducible pure perverse sheaf on $X$ and $K_2$ a pure complex on $X$.

If $j^* K_1$ is a summand of $j^* K_2$ then $K_1$ is a summand of $K_2$ unless $j^* K_1=0$.
 
 \end{lemma}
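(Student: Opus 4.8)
The plan is to lift the splitting data from $U$ to $X$ and then conclude by a Schur-type argument. If $j^*K_1=0$ there is nothing to prove, so assume $j^*K_1\neq 0$ and write $i\colon Z\hookrightarrow X$ for the closed complement of $U$, with its reduced structure. Since $K_1$ is pure it is pure of some weight $w$; as $j^*$ (an open restriction) preserves purity and weights, $j^*K_1$ is pure of weight $w$, and since $j^*K_1$ is a nonzero direct summand of the pure complex $j^*K_2$, the complex $j^*K_2$ — equivalently $K_2$ — must be pure of weight $w$ as well. Fix $\alpha_0\colon j^*K_1\to j^*K_2$ and $\beta_0\colon j^*K_2\to j^*K_1$ with $\beta_0\circ\alpha_0=\mathrm{id}_{j^*K_1}$, which exist because $j^*K_1$ is a direct summand of $j^*K_2$.

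The key step is to prove the two surjectivity statements
\[
\operatorname{Hom}_X(K_1,K_2)\twoheadrightarrow\operatorname{Hom}_U(j^*K_1,j^*K_2),\qquad \operatorname{Hom}_X(K_2,K_1)\twoheadrightarrow\operatorname{Hom}_U(j^*K_2,j^*K_1),
\]
where in both cases the map is $f\mapsto j^*f$. For the first, the $(j^*,Rj_*)$-adjunction identifies its target with $\operatorname{Hom}_X(K_1,Rj_*j^*K_2)$ and the map with composition by the unit $K_2\to Rj_*j^*K_2$; applying $\operatorname{Hom}_X(K_1,-)$ to the recollement triangle $i_*i^!K_2\to K_2\to Rj_*j^*K_2\xrightarrow{+1}$ then reduces the surjectivity to the vanishing of $\operatorname{Hom}_X(K_1,i_*i^!K_2[1])=\operatorname{Hom}_Z(i^*K_1,i^!K_2[1])$. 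Now $i^*K_1$ has weights $\le w$ and $i^!K_2$ has weights $\ge w$ by the weight properties of $i^*$ and $i^!$ (\cite[5.1.14]{bbd}), so $i^!K_2[1]$ has weights $\ge w+1$; hence this $\operatorname{Hom}$ group vanishes, there being no nonzero morphism from a complex of weights $\le w$ to one of weights $\ge w+1$ (\cite[5.1.15]{bbd}). The second surjectivity is established the same way, using the $(j_!,j^*)$-adjunction, the recollement triangle $j_!j^*K_2\to K_2\to i_*i^*K_2\xrightarrow{+1}$, and the vanishing of $\operatorname{Hom}_X(i_*i^*K_2,K_1[1])=\operatorname{Hom}_Z(i^*K_2,i^!K_1[1])$, where now $i^*K_2$ has weights $\le w$ while $i^!K_1[1]$ has weights $\ge w+1$.

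Granting these, lift $\alpha_0$ to $\alpha\colon K_1\to K_2$ and $\beta_0$ to $\beta\colon K_2\to K_1$. Then $\beta\circ\alpha\in\operatorname{Hom}_X(K_1,K_1)$ satisfies $j^*(\beta\circ\alpha)=\beta_0\circ\alpha_0=\mathrm{id}_{j^*K_1}$, which is nonzero because $j^*K_1\neq 0$; hence $\beta\circ\alpha\neq 0$. Since $K_1$ is an irreducible perverse sheaf, $\operatorname{End}_X(K_1)$ is a division ring, so $\beta\circ\alpha$ is invertible and $(\beta\circ\alpha)^{-1}\circ\beta$ is a retraction of $\alpha$. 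Thus $\alpha$ is a split monomorphism in $D^b_c(X)$, whence $K_2\cong K_1\oplus\operatorname{cone}(\alpha)$ by the standard splitting lemma for triangulated categories, i.e. $K_1$ is a direct summand of $K_2$. The only delicate point is the weight bookkeeping in the two surjectivity claims: one must verify that the shift by $[1]$ moves $i^!K_2$ and $i^!K_1$ strictly above weight $w$, and this — together with $i^*K_1$, $i^*K_2$ sitting in weights $\le w$ — is exactly where, and all that, the purity hypotheses on $K_1$ and $K_2$ are used; the rest is formal manipulation of recollement triangles and adjunctions.
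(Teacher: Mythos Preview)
Your overall strategy—lift the section and retraction from $U$ to $X$ via a surjectivity of restriction on $\operatorname{Hom}$, then invoke Schur—is attractive, but the crucial surjectivity step has a genuine gap. You assert that
\[
\operatorname{Hom}_Z(i^*K_1,\,i^!K_2[1])=0
\]
because ``there is no nonzero morphism from a complex of weights $\le w$ to one of weights $\ge w+1$,'' citing \cite[5.1.15]{bbd}. That vanishing principle is \emph{false} for complexes in $D^b_c(X_0,\overline{\mathbb Q}_\ell)$ over a finite field. A minimal counterexample: on $\mathbb G_{m,\mathbb F_q}$ the sheaf $\mathbb Q_\ell$ is pure of weight $0$ and $\mathbb Q_\ell[1]$ is pure of weight $1$, yet
\[
\operatorname{Hom}_{D^b_c(\mathbb G_{m,\mathbb F_q})}(\mathbb Q_\ell,\mathbb Q_\ell[1])=H^1_{\mathrm{\acute et}}(\mathbb G_{m,\mathbb F_q},\mathbb Q_\ell)\neq 0,
\]
the nonzero class coming from $H^1(\hat{\mathbb Z},H^0(\mathbb G_{m,\overline{\mathbb F_q}},\mathbb Q_\ell))$ in the Hochschild--Serre spectral sequence. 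What \cite[5.1.15]{bbd} actually gives is a lower bound on the Frobenius \emph{weights} of the geometric $\operatorname{Ext}$ groups; it does not force the arithmetic $\operatorname{Hom}$ to vanish, because the short exact sequence relating arithmetic and geometric $\operatorname{Ext}$ contributes a Frobenius-coinvariant term from one degree lower, and that term is only of weight $\ge 0$.

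The paper's proof sidesteps this entirely: it first replaces $K_2$ by ${}^p\mathcal H^0(K_2)$ using \cite[5.4.5]{bbd}, then invokes \cite[5.3.11]{bbd} to split the pure perverse sheaf as $j_{!*}K_2'\oplus i_*K_2''$, after which the middle-extension functor transports the summand relation from $U$ to $X$ directly. Your argument could in principle be repaired along similar lines—once $K_2$ is perverse one has $i^*K_1\in{}^pD^{\le -1}$ (because $K_1=j_{!*}j^*K_1$) and $i^!K_2[1]\in{}^pD^{\ge -1}$, so the obstruction Hom reduces to a Hom between perverse sheaves of weights $\le w-1$ and $\ge w$, where the weight filtration \cite[5.3.5]{bbd} does give vanishing—but as written the weight bookkeeping alone does not close the argument.
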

 
 \begin{proof} Since perversity, by definition, is preserved by restriction to an open set, we see that $j^* K_1$ is perverse and thus is a summand of ${}^p \mathcal H^0 ( j^* K_2) = j^* {}^p \mathcal H^0(K_2)$. Since ${}^p \mathcal H^0(K_2)$ is a summand of $K_2$ by \cite[Theorem 5.4.5]{bbd}, it suffices to prove that $K_1$ is a summand of ${}^p \mathcal H^0(K_2)$, i.e. we can reduce to the case where $K_2$ is perverse.
 
 By \cite[Corollary 5.3.11]{bbd}, for $i$ the closed immersion of the closed complement of $U$, we can write $K_2$ as a sum $j_{!*} K_2' \oplus  i_* K_2''$. We have
 \[ j^* K_2 = j^* (j_{!*} K_2' \oplus  i_* K_2'') =j^* j_{!*} K_2' \oplus  j^* i_* K_2''= K_2' \oplus 0 \]
 so $j^* K_1$ is a summand of $ K_2'$ and thus $j_{!*} j^* K_1$ is a summand of $j_{!*} K_2'$ which is a summand of $K_2$. So it suffices to show $j_{!*} j^* K_1\cong K_1$.
 
 But now applying \cite[Corollary 5.3.11]{bbd} to $K_1$, we have $K_1 = j_{!*} K_1' \oplus i_* K_1''$. Since $K_1$ is irreducible, only one of these terms can be nonzero, and since $j^* i_* K_1''=0$, if $j^* K_1\neq 0$ then we must have $K_1''=0$, i.e. $K_1 =j_{!*} K_1'$. But then 
 \[j_{!*} j^* K_1 = j_{!*} (j^*  j_{!* } K_1' ) = j_{!*} K_1' = K_1 ,\] as desired.
 
 \end{proof}

\begin{lemma}\label{etp-irreducible} Let $X_1$ and $X_2$ be two varieties over a field $\kappa$, and let $K_1$ and $K_2$ be geometrically irreducible perverse sheaves on $X_1$ and $X_2$. Then the external tensor product $K_1 \boxtimes K_2$ is a geometrically irreducible perverse sheaf. \end{lemma}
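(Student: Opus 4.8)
The plan is to reduce, via the classification of simple perverse sheaves, to the analogous statement for lisse sheaves, which is elementary representation theory. Since base change to $\overline\kappa$ commutes with the external tensor product, and geometric irreducibility means irreducibility after such a base change, I may and do assume $\kappa$ is algebraically closed. I will use repeatedly the formal fact that if $A\in{}^pD^{\le a}(X_1)$ and $B\in{}^pD^{\le b}(X_2)$ then $A\boxtimes B\in{}^pD^{\le a+b}(X_1\times X_2)$, and dually for ${}^pD^{\ge}$: with field coefficients the Künneth formula gives $\mathcal H^k(A\boxtimes B)=\bigoplus_{p+q=k}\mathcal H^p(A)\boxtimes\mathcal H^q(B)$, and $\dim\big(\mathrm{supp}\,\mathcal H^p(A)\times\mathrm{supp}\,\mathcal H^q(B)\big)=\dim\mathrm{supp}\,\mathcal H^p(A)+\dim\mathrm{supp}\,\mathcal H^q(B)$, so the support bounds add; the ${}^pD^{\ge}$ half follows by Verdier duality via $\mathbb D(A\boxtimes B)=\mathbb D A\boxtimes\mathbb D B$. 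In particular $K_1\boxtimes K_2$ is perverse.

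Next I would normalize the supports. For a closed immersion $\iota$, $\iota_*$ preserves perversity and irreducibility, and $(\iota_1\times\iota_2)_*(P_1\boxtimes P_2)=(\iota_{1*}P_1)\boxtimes(\iota_{2*}P_2)$; so replacing $X_i$ by $\mathrm{supp}\,K_i$ and $K_i$ by the corresponding perverse sheaf on it, I may assume each $K_i$ has full support, hence $X_i$ is irreducible. Pick a dense open, smooth, connected $V_i\subset X_i$ on which $K_i$ restricts to $\mathcal F_i[\dim X_i]$ for an irreducible lisse sheaf $\mathcal F_i$ (the generic behaviour of a simple perverse sheaf, \cite[4.3.1]{bbd}), and set $T_i=X_i\setminus V_i$. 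Then $V_1\times V_2$ is dense open, smooth and connected in $X_1\times X_2$, with complement $W=(T_1\times X_2)\cup(X_1\times T_2)$, and $(K_1\boxtimes K_2)|_{V_1\times V_2}=(\mathcal F_1\boxtimes\mathcal F_2)[\dim X_1+\dim X_2]$.

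To see that $\mathcal F_1\boxtimes\mathcal F_2$ is an irreducible lisse sheaf on $V_1\times V_2$, note that its monodromy representation factors through $\pi_1(V_1\times V_2)\to\pi_1(V_1)\times\pi_1(V_2)$, and this map is surjective because its image contains $\pi_1(V_1)\times 1$ and $1\times\pi_1(V_2)$ (pull back $\mathcal F_i$ along the sections $x\mapsto(x,v_2)$ and $y\mapsto(v_1,y)$). So it suffices that the external tensor product of two irreducible representations is irreducible as a representation of the product group, which over the (algebraically closed) coefficient field is the standard Burnside/Jacobson density argument. Hence $(K_1\boxtimes K_2)|_{V_1\times V_2}$ is an irreducible perverse sheaf.

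It remains to show that $K_1\boxtimes K_2$ has no nonzero subobject and no nonzero quotient supported on $W$: granting this, a proper nonzero subobject $S$ would restrict on $V_1\times V_2$ to $0$ or to all of $(K_1\boxtimes K_2)|_{V_1\times V_2}$ by irreducibility there, and in the first case $S$, in the second case $(K_1\boxtimes K_2)/S$, would be a nonzero object supported on $W$ — a contradiction. Equivalently I must show $i_W^*(K_1\boxtimes K_2)\in{}^pD^{\le-1}(W)$ and $i_W^!(K_1\boxtimes K_2)\in{}^pD^{\ge1}(W)$. The first is checked on the two closed pieces of $W$ separately (the support bound is the maximum over a covering by closed sets); on $T_1\times X_2$ one has $(\iota_{T_1}\times\id)^*(K_1\boxtimes K_2)=(\iota_{T_1}^*K_1)\boxtimes K_2$, which lies in ${}^pD^{\le-1}$ by the formal fact of the first paragraph together with $\iota_{T_1}^*K_1\in{}^pD^{\le-1}(T_1)$ — a defining property of the intermediate extension $K_1=(j_1)_{!*}(\mathcal F_1[\dim X_1])$, see \cite{bbd} — and $K_2\in{}^pD^{\le0}(X_2)$; the piece $X_1\times T_2$ is symmetric. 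The $i_W^!$ statement then follows by applying the $i_W^*$ statement to $\mathbb D K_1\boxtimes\mathbb D K_2=\mathbb D(K_1\boxtimes K_2)$ and dualizing. I expect the main obstacle to be exactly this last point: the $i_W^!$ condition is not checkable piecewise on $W$, so it must be routed through Verdier duality, and one has to verify that the reduction to full support is harmless; everything else is routine bookkeeping with the perverse $t$-structure.
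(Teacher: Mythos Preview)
Your proof is correct and follows essentially the same route as the paper's: reduce to $\kappa$ algebraically closed, pass to the supports, use that on the product of the generic open sets the external tensor product of irreducible local systems is irreducible, and then rule out subobjects or quotients supported on the boundary via the ${}^pD^{\le -1}$ bound on $i^*$ of an intermediate extension together with Verdier duality. The paper phrases the last step as an adjunction with $i_*$ rather than the equivalent $i_W^*/i_W^!$ conditions and cites \cite[4.2.8]{bbd} for perversity rather than redoing the support estimate, but the substance is identical.
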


\begin{proof} By \cite[Proposition 4.2.8]{bbd}, $K_1\boxtimes K_2$ is perverse.

By passing to the algebraic closure, we may assume $\kappa$ is algebraically closed, so geometrically irreducible is the same as irreducible.

By replacing $X_1$ and $X_2$ by the supports of $K_1$ and $K_2$, we may assume $K_1$ is supported on $X_1$ and $K_2$ is supported on $X_2$. Then $K_1$ and $K_2$ are middle extensions of irreducible local systems on smooth affine open subsets $U_1$ and $U_2$ of $K_1$ and $K_2$ \cite[Theorem 4.3.2(ii)]{bbd}. Restricted to the open set $U_1 \times U_2$, the perverse sheaf $K_1\boxtimes K_2$ is the external tensor product of irreducible local systems and thus is irreducible.

Thus if $K_1 \boxtimes K_2$ is not reducible, say it is the extension of a nontrivial quotient $Q$ by a nontrivial subobject $S$, at least one of $Q$ or $S$ must vanish on restriction to $U$ and thus have support contained is the complement $Z$ of $U_1 \times U_2 $. Since perverse sheaves and external tensor products are stable under duality, by dualizing everything we may assume without loss of generality that $Q$ has support contained in $Z$, and thus can be written as $i_* Q'$ for a perverse sheaf $Q'$ on $Z$. We have a nontrivial morphism $K_1 \boxtimes K_2 \to Q= i_* Q'$, thus by adjunction a nontrivial morphism $i^* (K_1 \boxtimes K_2) \to Q'$.

However, we can check that $i^* (K_1 \boxtimes K_2) [-1]$ is semiperverse. The complement of $U$ is a union $(X_1 \setminus U_1) \times X_2 \cup X \times (X_2 \setminus U_2)$, and it suffices to prove this on $(X_1 \setminus U_1) \times X_2 $ and $ X \times (X_2 \setminus U_2)$ separately, or, without loss of generality $(X_1 \setminus U_1) $. But the restriction of $K_1[-1]$ to $(X_1 \setminus U_1) $ is perverse by \cite[Corollary 4.1.12]{bbd}, so its external tensor product with $K_2$ is (semi)perverse by \cite[Proposition 4.2.8]{bbd}.

Because $i^* (K_1 \boxtimes K_2) [-1]$ is semiperverse, it can have no nontrivial morphism to the perverse sheaf $Q'$ by the definition of $t$-structure. \end{proof}

\begin{proof}[Proof of Theorem \ref{thmsummand}]
We begin by proving the existence of an isomorphism
\begin{equation}\label{key-isomorphism} \rho^* j^* K[2n^2] = u^*  \bigotimes_{i=1}^n \lambda_i^* \mathcal{K}\ell_2   [n^2] (-n (n-1)/2).\end{equation}

First we show how the formula of \cite[Theorem 1.1(2)]{Erdelyi-Toth} implies the traces of Frobenius on the stalks of the two sides of \eqref{key-isomorphism} at any fixed point of $\widetilde{U}(\mathbb F_q)$ are equal.
 A point of $\widetilde{U}(\mathbb F_q)$ is a matrix $a \in M_n(\mathbb F_q)$ with distinct eigenvalues, together with an $a$-stable flag $F$ on $\mathbb F_q^n$. The $i$th eigenvalue $\lambda_i$ is the eigenvalue of the action of $a$ on $F^i / F^{i-1}$, which, being the eigenvalue of a $1\times 1$ matrix over $\mathbb F_q$, lies in $\mathbb F_q$. So $a$ is a matrix with $n$ distinct eigenvalus, all in $\mathbb F_q$.
 
 The trace of Frobenius on the stalk of $\rho^* j^* K[2n^2]$ at $(a,F)$ is the trace of Frobenius on the stalk of $K[2n^2]$ at $j(\rho((a,F)))=a$, which by the Lefschetz fixed point formula is the matrix Kloosterman sum $K(a)$.
 
 On the other hand, the trace of Frobenius on the stalk of \[u^*  \bigotimes_{i=1}^n \lambda_i^* \mathcal{K}\ell_2   [n^2] (-n (n-1)/2)\] at $(a,F)$ is  $(-1)^{n^2} q^{ n (n-1)/2}$ times the trace of Frobenius on $u^*  \bigotimes_{i=1}^n \lambda_i^* \mathcal{K}\ell_2$, which itself is the product for $i$ from $1$ to $n$ of the trace of Frobenius on the stalk of $\mathcal{K}\ell_2$ at $\lambda_i ( (a,F))$. The stalk of $\mathcal{K}\ell_2$ at a point $\lambda_i$ is, by definition and the Lefschetz fixed point formula, $(-1)^n$ times the Kloosterman sum $K(\lambda_i)$, so the equality of traces follows from \cite[Theorem 1.1(2)]{Erdelyi-Toth}
\[K(a)=q^{n(n-1)/2}\prod_{j=1}^nK(\lambda_i)\]
once we realize that $(-1)^{n^2+n}=1$. Note that this works for an arbitrary finite field $\mathbb F_q$.

It follows from Theorem \ref{thmsheaf} and the preservation of perversity under \'{e}tale pullbacks \cite[first line of p. 109, $d=0$ case]{bbd} that $\rho^* j^* K[2n^2] $ is perverse. If we let $(\prod_i \lambda_i)$ be the morphism $\mathbb A^{n^2} \to \mathbb A^n$ whose $i$th coordinate is $\lambda_i$, then
\[  u^*  \bigotimes_{i=1}^n \lambda_i^* \mathcal{K}\ell_2   [n^2] (-n (n-1)/2)=  u^*   \bigl(\prod_ i \lambda_i\bigr)^* \boxtimes_{i=1}^n  \mathcal{K}\ell_2   [n^2] (-n (n-1)/2)\]
and since $\mathcal {K}\ell_2[1]$ is perverse and geometrically irreducible since it arises by \cite[Corollary 4.1.2(i)]{Katz-monodromy} from the construction of \cite[5.2.2(a)]{bbd}, $\boxtimes_{i=1}^n  \mathcal{K}\ell_2   [n] $ is perverse and geometrically irreducible by Lemma \ref{etp-irreducible}. The map $(\prod_i \lambda_i)^* $ is smooth of relative dimension $n^2-n$, and $u$ is an open immersion, so  $u^*   (\prod_i \lambda_i)^* \boxtimes_{i=1}^n  \mathcal{K}\ell_2   [n^2]$ is perverse and geometrically irreducible by \cite[Proposition 4.2.5]{bbd}. Tate twisting does not affect perversity and geometrically irreducibility, since it does not change the complex over an algebraically closed field at all.

By Lemma \ref{trace-equals-criterion}, this implies \eqref{key-isomorphism}. From \eqref{key-isomorphism} we obtain \[ \rho_* \rho^* j^* K[2n^2] = \rho_* u^*  \bigotimes_{i=1}^n \lambda_i^* \mathcal{K}\ell_2   [n^2] (-n (n-1)/2).\]

Since $\rho$ is a finite \'{e}tale morphism, by \cite[IX, (5.1.4)]{sga4-3} there are morphisms $j^* K[2n^2] \to \rho_* \rho^* j^*K [2n^2] \to j^* K[2n^2]$ whose composition is multiplication by the degree of $\rho$ and thus is invertible, meaning that  $j^* K [2n^2]$ is a summand of $\rho_* \rho^* j^* K[2n^2]$. It follows that $j^* K [2n^2]$ is a summand of \[\rho_*  u^*  \bigotimes_{i=1}^n \lambda_i^* \mathcal{K}\ell_2   [n^2] (n (n-1)/2),\] which, since $\pi$ is proper, is isomorphic by proper base change \cite[XVII, Proposition 5.2.8]{sga4-3} to \begin{equation}\label{complex-its-summand-of} j^* R\pi_*   \bigotimes_{i=1}^n \lambda_i^* \mathcal{K}\ell_2   [n^2] (n (n-1)/2).\end{equation}

Again using the isomorphism $ \bigotimes_{i=1}^n \lambda_i^* \mathcal{K}\ell_2  =     (\prod_i \lambda_i)^* \boxtimes_{i=1}^n  \mathcal{K}\ell_2 $, since $ \mathcal{K}\ell_2 $ is pure of weight $1$, $ \boxtimes_{i=1}^n  \mathcal{K}\ell_2 $ is pure of weight $n$ \cite[(5.1.14.1) and (5.1.14.1*)]{bbd}, and because $     (\prod_i \lambda_i)$ is smooth, $\bigotimes_{i=1}^n \lambda_i^* \mathcal{K}\ell_2$ is pure of weight $n$ \cite[Stabilities 5.1.14(i,i*)]{bbd}.   Because $\pi$ is proper, $R \pi_*$ preserves weights, as does $j^*$ \cite[Stabilities 5.1.14(i,i*)]{bbd}. The shift $[n^2]$ raises weights by $n^2$, and the Tate twist $(- n(n-1)/2)$ raises them by $n(n-1)$, so altogether \eqref{complex-its-summand-of} is pure of weight $2n^2$.

We apply Lemma \ref{direct-summand-criterion} with $K_1= K[2n^2]$ and $K_2 = R\pi_*   \bigotimes_{i=1}^n \lambda_i^* \mathcal{K}\ell_2   [n^2] (n (n-1)/2)$. We have checked all the conditions except that $j^* K[2n^2]$ is nonzero, but $j_* K[2n^2]=0$ is easy to rule out here since that would imply its trace function is identically zero, but the formula of \cite[Theorem 1.1.2(2)]{Erdelyi-Toth} is manifestly not identically zero (noting that the standard Kloosterman sum is nonzero since it is congruent to $p-1 \equiv -1\not\equiv 0$ modulo $e^{ 2\pi i /p}-1$), so we conclude that $K[2n^2]$ is a summand of \[R\pi_*   \bigotimes_{i=1}^n \lambda_i^* \mathcal{K}\ell_2   [n^2] (n (n-1)/2)\] and shifting both sides by $[-2n^2]$ we get the desired statement.
\end{proof}

We specialize this fact to a particular value of $a$.

\begin{corollary}\label{corsummand} For any matrix $a$ over $\mathbb F_q$, $H^j(a)$ is a summand of \begin{equation}\label{sum-of-geometrically} H^{j-n^2} \Bigl( \pi^{-1}(a), \bigotimes_{i=1}^n \lambda_i^* \mathcal{K}\ell_2   (n (n-1)/2)\Bigr). \end{equation}  \end{corollary}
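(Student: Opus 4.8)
The plan is to obtain this by specializing the direct summand relation of Theorem~\ref{thmsummand} to the single point $a$ and invoking base change on both sides; nothing deep beyond Theorem~\ref{thmsummand} is needed.

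First I would record that Theorem~\ref{thmsummand} provides an isomorphism in $D^b_c(\mathbb A^{n^2},\mathbb Q_\ell)$ exhibiting $K$ as a direct summand of $R\pi_*\bigotimes_{i=1}^n\lambda_i^*\mathcal{K}\ell_2[-n^2](n(n-1)/2)$, i.e.\ the latter complex is isomorphic to $K$ plus a complementary complex. Let $i_a\colon \mathrm{Spec}\,\mathbb F_q\to\mathbb A^{n^2}$ be the inclusion of the point $a$. Since $i_a^*$ is an additive triangulated functor it preserves direct sum decompositions, so $i_a^*K$ is a direct summand of $i_a^*R\pi_*\bigl(\bigotimes_{i=1}^n\lambda_i^*\mathcal{K}\ell_2\bigr)[-n^2](n(n-1)/2)$.

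Next I would identify the two stalks. On the left, $K=R pr_{1!}\mathcal L_\psi(\tr(ax^{-1}+x))$ by construction, and since formation of $R pr_{1!}$ commutes with arbitrary base change \cite[XVII, Proposition 5.2.8]{sga4-3}, $i_a^*K$ is identified with the complex $R\Gamma_c(GL_{n,\overline{\mathbb F_q}},\mathcal L_\psi(\tr(ax^{-1}+x)))$ computing $H^*(a)$, so that $\mathcal H^j(i_a^*K)=H^j(a)$. On the right, $\pi$ is proper, so $R\pi_*=R\pi_!$ and proper base change \cite[XVII, Proposition 5.2.8]{sga4-3} identifies $i_a^*R\pi_*\mathcal G$ with $R\Gamma_c(\pi^{-1}(a),\mathcal G|_{\pi^{-1}(a)})$, where $\mathcal G=\bigotimes_{i=1}^n\lambda_i^*\mathcal{K}\ell_2[-n^2](n(n-1)/2)$. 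The fiber $\pi^{-1}(a)$ is a closed subscheme of the full flag variety of $\mathbb A^n$, hence proper, so $R\Gamma_c(\pi^{-1}(a),-)=R\Gamma(\pi^{-1}(a),-)$, and therefore $\mathcal H^j(i_a^*R\pi_*\mathcal G)=H^{j-n^2}\bigl(\pi^{-1}(a),\bigotimes_{i=1}^n\lambda_i^*\mathcal{K}\ell_2(n(n-1)/2)\bigr)$.

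Finally, applying the additive functor $\mathcal H^j$ to the summand relation of complexes from the previous step yields exactly that $H^j(a)$ is a direct summand of $H^{j-n^2}\bigl(\pi^{-1}(a),\bigotimes_{i=1}^n\lambda_i^*\mathcal{K}\ell_2(n(n-1)/2)\bigr)$, compatibly with the Frobenius action since the whole argument takes place over $\mathbb F_q$. I do not expect any genuine obstacle here; the only points that need attention are the bookkeeping of the shift $[-n^2]$ and the Tate twist $(n(n-1)/2)$, and the remark that $\pi^{-1}(a)$ is proper, which is what allows the statement to be phrased with ordinary cohomology $H^*$ rather than compactly supported cohomology $H^*_c$.
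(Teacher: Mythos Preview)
Your argument is correct and is exactly the approach the paper takes: the paper's proof reads in its entirety ``This follows immediately from Theorem~\ref{thmsummand} after taking stalks at $a$ and applying proper base change,'' and you have simply unpacked those words, including the implicit identification of the stalk of $K$ with $H^*(a)$ and the observation that $\pi^{-1}(a)$ is proper so that ordinary and compactly supported cohomology agree.
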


\begin{proof} This follows immediately from Theorem \ref{thmsummand} after taking stalks at $a$ and applying proper base change \cite[XVII, Proposition 5.2.8]{sga4-3}.\end{proof} 

This enables us to give geometric proofs of Theorems \ref{thmpurity} and \ref{thmreg}. 
\begin{proof}[Proof of Theorem \ref{thmreg} for $a$ regular]
 If $a$ is regular then $\pi^{-1}(a)$ consists of finitely many points, as $a$ has finitely many invariant subspaces of each dimension. Thus \eqref{sum-of-geometrically} vanishes unless $j = n^2$, and so $H^j(a)$ vanishes unless $j=n^2$ by Corollary \ref{corsummand}.
\end{proof}

\begin{proof}[Proof of Theorem \ref{thmpurity}] We can write $\pi^{-1}(a)$ as a disjoint union of connected components, say $Z_1,\dots, Z_n$. Then we have 
\[ H^{j-n^2} \Bigl( \pi^{-1}(a), \bigotimes_{i=1}^n \lambda_i^* \mathcal{K}\ell_2   (n (n-1)/2)\Bigr) = \bigoplus_{r=1}^n  H^{j-n^2} \Bigl( Z_r,  \bigotimes_{i=1}^n \lambda_i^* \mathcal{K}\ell_2   (n (n-1)/2)\Bigr).\]
Now the maps $\lambda_i$ can take only finitely many values on $\pi^{-1}(a)$, those being the eigenvalues of $a$. It follows that $\lambda_i$ is constant on each connected component $Z_r$. (The image of a connected component under $\lambda_i$ must be a connected component of the image of $\lambda_i$, but if the image is finite, then each connected component is a point.)  Letting $\lambda_{i} (Z_r)$ be this constant value, then $\lambda_i^* \mathcal {K}\ell_2$ is the tensor product of the constant sheaf with the Galois representation $(\mathcal{K}\ell_{2} )_{\lambda_i(Z_r)}$. This gives
\[  H^{j-n^2} \Bigl( Z_r,  \bigotimes_{i=1}^n \lambda_i^* \mathcal{K}\ell_2   (n (n-1)/2)\Bigr) =  H^{j-n^2} \Bigl( Z_r,  \mathbb Q_\ell \otimes  \bigotimes_{i=1}^n (\mathcal{K}\ell_{2} )_{\lambda_i(Z_r)} (n (n-1)/2)\Bigr) \] \[= H^{j-n^2} \Bigl( Z_r,  \mathbb Q_\ell \Bigr) \otimes  \bigotimes_{i=1}^n (\mathcal{K}\ell_{2} )_{\lambda_i(Z_r)} (n (n-1)/2). \]

By \cite[Theorem 1]{Springer2}, $H^{j-n^2} \Bigl( Z_r,  \mathbb Q_\ell \Bigr) $ is pure of weight $j-n^2$.  Using the fact that the stalk of $ \mathcal{K}\ell_2 $ is pure of weight $1$ except at $0$ where it is pure of weight $0$ \cite[Theorem 4.1.1(1) and Theorem 7.4.3]{Katz-monodromy}, and the fact that the number of $i$ with $\lambda_i=0$ is the multiplicity of $0$ as an eigenvalue and thus is equal to $k$, we see that  $  \bigotimes_{i=1}^n (\mathcal{K}\ell_{2} )_{\lambda_i(Z_r)} $ is pure of weight $n-k$.  Finally  $\mathbb Q_\ell(n (n-1)/2)$ is pure of weight $n(n-1)$.

So their tensor product is pure of weight $(n^2-j )+ (n-k) + n(n-1) = j-k$. Thus the same thing is true for $H_j(a)$, because $H_j(a)$ is a summand of a sum of these tensor products.\end{proof}

\section{Combinatorial arguments}\label{selm}

Now we turn to the proof of Theorem \ref{thmJblock}. Therefore fix $a\in M_n(\mathbb{F}_q)$ with a unique nonzero eigenvalue $\alpha$ and let
\[r=\min(j|(a-\alpha\id)^j=0).\]

There is a filtration on \(V=\mathbb{F}_q^n\) given by
\[
V_j=\mathrm{Ker}((a-\alpha\id)^j)\leq\mathbb{F}_q^n,
\]
wich we extend by restriction ot any subspace $W\leq\mathbb{F}^n$, that is we let $W_j=\mathrm{Ker}((a-\alpha\id)^j|_W)=W\cap V_j$.

The associated numerical data  \(\mu_j=\dim(V_j/V_{j-1})\) defines a partition \(\mu\) of \(n=\dim V\), 
\begin{equation}\label{eq:mu-def}
\mu=[\mu_1\geq\mu_2\geq\dots\geq\mu_r]\vdash n.
\end{equation}  
The condition \(\sum_{j=1}^r \mu_j=n\) is obvious, and for $0\leq j<r-1$ the map \(a-\alpha\id:V/V_{j+1}\to V/V_j\) is injective and maps \(V_{j+2}/V_{j+1}\) into \(V_{j+1}/V_j\) which shows that \(\mu_1\geq\mu_2\geq\dots\geq\mu_r\). Note that the dual partition \(\mu'\) codifies the Jordan block structure of \(a\), see e.g. \cite{Springer-Steinberg} Chapter 4.

The following counting result and its recursive version will be needed for the proof of Theorem~\ref{thmJblock}. They involve a decomposition of the numbers
\[\#\left(W\leq\mathbb{F}_q^n|\dim(W)=k,aW=W\right)\] 
in that theorem by their filtration type. In what follows \(  {n \choose k}_{q} =
\frac {(1-q^{n})(1-q^{n-1})\cdots  (1-q^{n-k+1})} {(1-q)(1-q^{2})\cdots (1-q^{k})}\) is the $q$-binomial coefficient. 

\begin{theorem}
Let $\nu=[\nu_1\geq\nu_2\geq\dots\geq\nu_r]\vdash k$ be a partition, (with some $\nu_j$-s possibly 0) and set $\nu_{r+1}=0$.
Then
\begin{eqnarray*}
V(\mu,\nu):=\#\left(W\leq\mathbb{F}_q^n|aW\leq W, \dim(W_{j+1}/W_j)=\nu_j\mathrm{~for~}1\leq j\leq r\right)\\
=\prod_{j=1}^r\binom{\mu_j-\nu_{j+1}}{\nu_j-\nu_{j+1}}_q\cdot q^{\nu_{j+1}(\mu_j-\nu_j)}.
\end{eqnarray*}
\end{theorem}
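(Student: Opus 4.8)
The plan is induction on the nilpotency index $r$ of $N := a - \alpha\,\id$, stripping off the bottom step $V_1 = \Ker N$ of the filtration. Note first that $aW \le W$ if and only if $NW \le W$, so we are counting $N$-stable subspaces $W$ whose induced filtration $W_j := W\cap V_j$ has prescribed jumps $\dim(W_j/W_{j-1}) = \nu_j$ (``$W$ has type $\nu$''; in particular $\dim W = \sum_{j=1}^r \nu_j = k$). The first factor of the product will be the contribution of the bottom step, and the remaining factors come from the inductive hypothesis.

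\emph{Setting up the recursion.} Pass to $\overline V := V/V_1$ with its induced nilpotent $\overline N$. Since $\Ker \overline N^{\,j} = V_{j+1}/V_1$, the operator $\overline N$ has index $r-1$, and the partition $\overline\mu := [\mu_2 \ge \dots \ge \mu_r]$ plays for $\overline N$ the role $\mu$ plays for $N$. For $N$-stable $W$ put $\overline W := (W+V_1)/V_1$, an $\overline N$-stable subspace; by the modular law ($V_1 \subseteq V_{j+1}$ gives $(W+V_1)\cap V_{j+1} = (W\cap V_{j+1})+V_1$) one finds $\dim\bigl(\overline W \cap V_{j+1}/V_1\bigr) = \dim W_{j+1} - \dim W_1$. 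Hence $W$ has type $\nu$ exactly when $\overline W$ has type $\overline\nu := [\nu_2 \ge \dots \ge \nu_r]$ and $\dim(W\cap V_1) = \nu_1$. Grouping the subspaces $W$ of type $\nu$ by their image $\overline W$ and invoking the inductive hypothesis $V(\overline\mu,\overline\nu) = \prod_{j=2}^r \binom{\mu_j-\nu_{j+1}}{\nu_j-\nu_{j+1}}_q q^{\nu_{j+1}(\mu_j-\nu_j)}$, it remains to prove that for each fixed $\overline W$ of type $\overline\nu$ the number of $N$-stable $W$ with image $\overline W$ and $\dim(W\cap V_1) = \nu_1$ equals $\binom{\mu_1-\nu_2}{\nu_1-\nu_2}_q\, q^{\nu_2(\mu_1-\nu_1)}$ --- in particular, that this count does not depend on $\overline W$. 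The base case $r = 1$ (that is, $a = \alpha\,\id$) is the classical count $\binom{n}{k}_q$ of $k$-dimensional subspaces of $\mathbb F_q^n$.

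\emph{The fiber count.} Let $\widetilde W$ be the full preimage of $\overline W$ in $V$; it is $N$-stable, contains $V_1$, and every $W$ in the fiber satisfies $W + V_1 = \widetilde W$. The key observation is that then $NW = N(W+V_1) = N\widetilde W$, so $W$ is $N$-stable if and only if $N\widetilde W \le W$; thus the fiber is precisely $\{\,W : N\widetilde W \le W \le \widetilde W,\ W+V_1 = \widetilde W,\ \dim(W\cap V_1) = \nu_1\,\}$, each such $W$ being automatically $N$-stable. Now pass to $Q := \widetilde W/N\widetilde W$, which has dimension $\dim\widetilde W - \dim\overline W = \mu_1$, and let $\overline V_1 := (V_1 + N\widetilde W)/N\widetilde W$; from $V_1 \cap N\widetilde W = N(\widetilde W\cap V_2)$, together with $\dim(\widetilde W\cap V_2) = \mu_1+\nu_2$ and $\Ker(N|_{\widetilde W\cap V_2}) = V_1$, one gets $\dim\overline V_1 = \mu_1-\nu_2$. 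Under $W \mapsto S := W/N\widetilde W$ the surviving conditions become $\dim S = \nu_1$ and $S + \overline V_1 = Q$, which force $\dim(S\cap\overline V_1) = \nu_1 - \nu_2$; the number of such $S$ is then obtained by choosing $S\cap\overline V_1$ inside $\overline V_1$ in $\binom{\mu_1-\nu_2}{\nu_1-\nu_2}_q$ ways and then choosing $S/(S\cap\overline V_1)$ as a complement to $\overline V_1/(S\cap\overline V_1)$ in $Q/(S\cap\overline V_1)$ in $q^{\nu_2(\mu_1-\nu_1)}$ ways. This is the required factor, so the induction closes.

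\emph{Main obstacle.} The crux is the fiber count, and inside it the ``sandwiching'' identity $N\widetilde W \le W \le \widetilde W$: it both linearises the $N$-stability condition and makes the count manifestly independent of $\overline W$. After that the computation is elementary linear algebra over $\mathbb F_q$. What remains is bookkeeping --- the index shift between $(\mu,\nu)$ and $(\overline\mu,\overline\nu)$, the base case, and the observation that degenerate inputs (e.g.\ $\nu_1 > \mu_1$, or $\nu$ not dominated by $\mu$) are correctly returned as $0$ since $\binom a b_q = 0$ for $b > a$.
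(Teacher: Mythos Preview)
Your proof is correct and follows essentially the same approach as the paper: both build $W$ inductively through the filtration $V_j=\Ker N^j$, and both hinge on the observation that once the image of $W$ in the next quotient is fixed, $N$-stability becomes the linear sandwich condition $N\widetilde W\le W\le\widetilde W$. The only difference is organizational: the paper runs an explicit descending loop, at step $j$ counting extensions of $W/W_{j+1}\hookrightarrow V/V_{j+1}$ to $W/W_j\hookrightarrow V/V_j$ (from $j=r-1$ down to $j=0$), whereas you package the same computation as a single recursion on $r$, peeling off $V_1$ and invoking the inductive hypothesis on $\overline V=V/V_1$; your fiber count is exactly the paper's $j=1$ step (noting $\overline W\cong W/W_1$ via $\iota_1$), and unwinding your recursion reproduces the remaining steps.
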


\begin{proof}
For $0\leq j<r$ the map $\iota_j:W/W_j\to V/V_j$, $w+W_j\mapsto w+V_j$ is also injective thus (denoting $a-\alpha\id$ and $p$ the restrictions to $W$ also) we have the following diagram (with non-exact rows):

\begin{center}
\begin{tikzcd}
V/V_{j+1} \arrow[r, hook, "a-\alpha\id"] & V/V_j\arrow[r, twoheadrightarrow,"\pi"] & V/V_{j+1}\\
W/W_{j+1} \arrow[r, hook, "a-\alpha\id"] \arrow[u, hook, "\iota_{j+1}"] & W/W_j \arrow[r, twoheadrightarrow,"\pi"] \arrow[u, hook, "\iota_j"] & W/W_{j+1} \arrow[u, hook, "\iota_{j+1}"]
\end{tikzcd} 
\end{center}

Starting from $j=r-1$ and decreasing $j$ we can enumerate all possibilities for $W/W_j\leq V/V_j$.

In the case $j=r-1$ it is needed to choose the image of $\iota_r$ in $V_r/V_{r-1}$, i. e. a $\nu_r$-dimensional subspace in a $\mu_r$-dimensional space. There are clearly $\displaystyle{\binom{\mu_r}{\nu_r}_q=\binom{\mu_r-\nu_{r+1}}{\nu_r-\nu_{r+1}}_q\cdot q^{\nu_{r+1}(\mu_r-\nu_r)}}$ possibilities by the definition of $q$-binomial coefficients.

For $j<r-1$ fix $W/W_{j+1}\leq V/V_{j+1}$ from the previous step. We count the number of possibilities for $W/W_j\leq V/V_j$.

The conditions on $W/W_j$ can be reformulated as follows:

\begin{enumerate}
\item $\pi(W/W_j)=W/W_{j+1}$,
\item $(a-\alpha\id)(W/W_{j+1})\leq W/W_j$ (as $aW=W$) and
\item $\dim \Ker(\pi|_{W/W_j})=\dim (W_j/W_{j-1})=\nu_j$.
\end{enumerate}

First look at $\Ker(\pi|_{W/W_j})=W_{j+1}/W_j$. We have
\[(a-\alpha\id)(W_{j+2}/W_{j+1})\leq W_{j+1}/W_j.\]
Thus $W_{j+1}/W_j$ is a $\nu_j$-dimensional subspace of $V_{j+1}/V_j$ containing a fixed $\nu_{j+1}$-dimensional subspace. There are $\displaystyle{\binom{\mu_j-\nu_{j+1}}{\nu_j-\nu_{j+1}}_q}$ possibilities for it.

Now we have
\[W'/W_j:=(a-\alpha\id)(W/W_{j+1})+W_{j+1}/W_j\leq W/W_j\]
of codimension $\nu_{j+1}$. Counting dimensions we get
\[W''/W_{j+1}:=\pi(W'/W_j)\leq W/W_{j+1}\]
is also of codimension $\nu_{j+1}$ as $\dim \Ker(\pi|_{W/W_j})=\nu_j$.

Fix a basis of $W''/W_{j+1}$ and extend it to a basis of $W/W_{j+1}$ with some $w_1+W_{j+1},w_2+W_{j+1},\dots,w_{\nu_{j+1}}+W_{j+1}$. Then $W/W_j$ contains exactly one of the preimages $\pi|_{W/W_j}^{-1}(w_k+W_{j+1})$ modulo $\Ker(\pi|_{W/W_j})$, so there are $q^{\mu_j-\nu_j}$ choices for each.

All in all there are exactly $\displaystyle{\binom{\mu_j-\nu_{j+1}}{\nu_j-\nu_{j+1}}_q\cdot q^{\nu_{j+1}(\mu_j-\nu_j)}}$ ways to extend $W/W_{j+1}$ to $W/W_j$.

Finally for $j=0$ we have $W/W_j=W\leq V$, so the number of the subspaces $W$ is as in the statement.
\end{proof}

We will now give a recursive formula for the numbers \(V(\mu,\nu)\). Therefore for a partition \(\mu\) we let 
\begin{align}\label{eq:mu-primes}
\mu'=&[\mu_1\geq\mu_2\geq\dots\geq\mu_{r-1}\geq\mu_r-1],
\nonumber \\ \mu''=&[\mu_1\geq\mu_2\geq\dots\geq\mu_{r-1}-1\geq\mu_r-1]\\ \mu'''=&[\mu_1\geq\mu_2\geq\dots\geq\mu_{r-1}\geq\mu_r-2]
\nonumber
\end{align}
whenever these operations lead to partitions. If any of these operations results in a collection of numbers with a negative entry (i. e. $\mu''$ if $r=1$ or $\mu'''$ if $\mu_r=1$) then the operation is undefined (or interpreted as \(\emptyset\), not to be confused with the unique partition \([]\) of \(0\)).
With this notation we have
\begin{lemma}\label{q-binom-eqn}
	\begin{equation*}
		V(\mu,\nu)=V(\mu',\nu)+V(\mu',\nu')-q^{\mu_r-1}V(\mu'',\nu')+(q^{\mu_r-1}-1)V(\mu''',\nu')
	\end{equation*}
with the interpretation in (\ref{eq:mu-primes}), esp. that terms with negative entries are omitted from the sum.
\end{lemma}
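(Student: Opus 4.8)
The plan is to prove the identity by brute substitution of the closed product formula of the previous theorem; after cancelling a common prefix it collapses to a single instance of the $q$-Pascal recursion. The two elementary facts I would record first are
\[
\binom{N}{L}_q=q^{N-L}\binom{N-1}{L-1}_q+\binom{N-1}{L}_q ,
\qquad
(q^{N-1}-1)\binom{N-2}{L-1}_q=(q^{N-L}-1)\binom{N-1}{L-1}_q ,
\]
the second being immediate from $\binom{N-1}{L-1}_q=\frac{1-q^{N-1}}{1-q^{N-L}}\binom{N-2}{L-1}_q$, which in turn is a one-line telescoping in the explicit product.

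Assume first $r\ge 2$ (and write $\nu'$ for $\nu$ with its last part decreased by one). In the product defining $V(\mu,\nu)$ the factors indexed by $j=1,\dots,r-2$ are literally unchanged when $\mu$ is replaced by $\mu'$, $\mu''$, or $\mu'''$ and $\nu$ by $\nu'$, since all four operations modify only the parts $r-1$ and $r$; so I would factor this common prefix $P$ out of all five terms in the asserted identity and cancel it. Abbreviating the $j=r-1$ and $j=r$ factors of $V(\mu,\nu)$ by $A=\binom{\mu_{r-1}-\nu_r}{\nu_{r-1}-\nu_r}_q\,q^{\nu_r(\mu_{r-1}-\nu_{r-1})}$ and $B=\binom{\mu_r}{\nu_r}_q$ (recall $\nu_{r+1}=0$), each primed term factors likewise as its $(r-1)$-factor times its $r$-factor: one reads off $V(\mu',\nu)/P=A\binom{\mu_r-1}{\nu_r}_q$; the $r$-factor of each of $V(\mu',\nu')$ and $V(\mu'',\nu')$ is $\binom{\mu_r-1}{\nu_r-1}_q$ while that of $V(\mu''',\nu')$ is $\binom{\mu_r-2}{\nu_r-1}_q$; and the $(r-1)$-factors of $V(\mu',\nu')$ and $V(\mu''',\nu')$ coincide — call it $A'$ — while that of $V(\mu'',\nu')$ is a related quantity $A''$.

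Then the computation I would carry out runs as follows. Applying the first displayed recursion to $B$ splits off exactly $A\binom{\mu_r-1}{\nu_r}_q=V(\mu',\nu)/P$, leaving $q^{\mu_r-\nu_r}A\binom{\mu_r-1}{\nu_r-1}_q$ to be matched against the remaining three contributions. Using the second displayed fact to rewrite $(q^{\mu_r-1}-1)\binom{\mu_r-2}{\nu_r-1}_q$ as $(q^{\mu_r-\nu_r}-1)\binom{\mu_r-1}{\nu_r-1}_q$, all three now carry the common factor $\binom{\mu_r-1}{\nu_r-1}_q$, and after pulling it out the bracketed remainder is $q^{\mu_r-\nu_r}A'-q^{\mu_r-1}A''$. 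Collecting the powers of $q$ that separate $A',A''$ from their pure $q$-binomial parts reduces this to $\binom{\mu_{r-1}-\nu_r+1}{\nu_{r-1}-\nu_r+1}_q-\binom{\mu_{r-1}-\nu_r}{\nu_{r-1}-\nu_r+1}_q$, which by the first displayed recursion equals $q^{\mu_{r-1}-\nu_{r-1}}\binom{\mu_{r-1}-\nu_r}{\nu_{r-1}-\nu_r}_q$; a short exponent count then identifies $q^{\mu_r-\nu_r}A'-q^{\mu_r-1}A''$ with $q^{\mu_r-\nu_r}A$, and the first recursion read backwards reassembles $A\binom{\mu_r-1}{\nu_r}_q+q^{\mu_r-\nu_r}A\binom{\mu_r-1}{\nu_r-1}_q=AB=V(\mu,\nu)/P$.

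Finally I would dispose of the boundary cases under the conventions of \eqref{eq:mu-primes}: when $\mu_r=1$ the term $V(\mu''',\nu')$ is omitted (and a part of $\mu'$, $\mu''$ drops to $0$), when $r=1$ the term $V(\mu'',\nu')$ is omitted and there is no $(r-1)$-factor at all — in which case the asserted identity is exactly the combination of the two displayed facts that appears at the end of the generic argument — and when $\nu_r=0$ the primed-$\nu$ terms are omitted and the identity degenerates to $V(\mu,\nu)=V(\mu',\nu)$, which is visible directly from the product formula since the $j=r$ factor is then $1$. I expect no conceptual obstacle; the only real work is keeping the exponents of $q$ straight and verifying which primed partitions are defined in each boundary case.
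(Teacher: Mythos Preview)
Your proof is correct and follows essentially the same route as the paper: factor out the common prefix $P$ over $j\le r-2$, then reduce the remaining two factors to a short chain of $q$-Pascal manipulations --- the paper combines the $j=r$ factors of $V(\mu',\nu')$ and $(q^{\mu_r-1}-1)V(\mu''',\nu')$ exactly via your second displayed identity, then subtracts the $\mu''$ term to produce the same difference $\binom{\mu_{r-1}-\nu_r+1}{\nu_{r-1}-\nu_r+1}_q-\binom{\mu_{r-1}-\nu_r}{\nu_{r-1}-\nu_r+1}_q$, and finally reassembles $\binom{\mu_r}{\nu_r}_q$ by $q$-Pascal. Your write-up is slightly more explicit about the two elementary facts being used and about the boundary cases $r=1$, $\mu_r=1$, $\nu_r=0$, which the paper leaves implicit, but the argument is the same.
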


\begin{proof}
The proof is a technical, but straightforward calculation.	Set
	\[P=\prod_{j=1}^{r-2}\binom{\mu_j-\nu_{j+1}}{\nu_j-\nu_{j+1}}_q\cdot q^{\nu_{j+1}(\mu_j-\nu_j)}\]
	This appears in the explicit formula for all the terms $V(\cdot,\cdot)$ above.
	
Then
\begin{align*}
V(\mu',\nu')+(q^{\mu_r-1}-1)V(\mu''',\nu')=\\
		\binom{\mu_{r-1}-\nu_r+1}{\nu_{r-1}-\nu_r+1}_q\cdot q^{(\nu_r-1)(\mu_{r-1}-\nu_{r-1})}\left(\binom{\mu_r-1}{\nu_r-1}_q+(q^{\mu_r-1}-1)\binom{\mu_r-2}{\nu_r-1}_q\right)P=\\
		=q^{\mu_r-1}\binom{\mu_r-1}{\nu_r-1}_q\binom{\mu_{r-1}-\nu_r+1}{\nu_{r-1}-\nu_r+1}_q\cdot q^{(\nu_r-1)(\mu_{r-1}-\nu_{r-1}-1)}P
\end{align*}
And
\begin{align*}
		V(\mu',\nu')+(q^{\mu_r-1}-1)V(\mu''',\nu')-q^{\mu_r-1}V(\mu'',\nu')=\\
		=q^{\mu_r-1}\binom{\mu_r-1}{\nu_r-1}_q\cdot q^{(\nu_r-1)(\mu_{r-1}-\nu_{r-1}-1)}\left(\binom{\mu_{r-1}-\nu_r+1}{\nu_{r-1}-\nu_r+1}_q-\binom{\mu_{r-1}-\nu_r}{\nu_{r-1}-\nu_r+1}_q\right)P\\
		=q^{\mu_r-\nu_r}\binom{\mu_r-1}{\nu_r-1}_q\binom{\mu_{r-1}-\nu_r}{\nu_{r-1}-\nu_r}_q\cdot q^{\nu_r(\mu_{r-1}-\nu_{r-1})}P
	\end{align*}
	Thus
	\begin{align*}
		V(\mu',\nu)+V(\mu',\nu')+(q^{\mu_r-1}-1)V(\mu''',\nu')-q^{\mu_r-1}V(\mu'',\nu')=\\
		\left(\binom{\mu_r-1}{\nu_r}_q+q^{\mu_r-\nu_r}\binom{\mu_r-1}{\nu_r-1}_q\right)\binom{\mu_{r-1}-\nu_r}{\nu_{r-1}-\nu_r}_q\cdot q^{\nu_r(\mu_{r-1}-\nu_{r-1})}P=V(\mu,\nu)
	\end{align*}
\end{proof}

Before we can turn to the proof of Theorem \ref{thmJblock} we need a result from 
 \cite{Erdelyi-Toth}. There in Theorem 18.1. the Jordan block structure of \(a\), i.e. the dual partition $\lambda=\mu'$ was used to describe the recursion algorithm. To state this theorem we denote \(K(a)\) by \(K_\lambda(\alpha)\), and will drop \(\alpha\) from the notation since  it is fixed for us. In particular \(K_{[1]}=K_{[1]}(\alpha)=K(\alpha)\) is the classical Kloosterman. 
 We also let \(K_{[\;]}=1\).
   
For example with this notation Theorem 1.4 of \cite{Erdelyi-Toth} says that for \(\alpha\neq 0\)
\begin{equation}\label{eq:recursion-scalar}
K_{[1^n]}=q^{n-1}K_{[1]} K_{[1^{n-1}]} + q^{2n-2}(q^{n-1}-1)\, K_{[1^{n-2}]}.
\end{equation}
In general the result in \cite{Erdelyi-Toth} gives for  \(\lambda=[n_l^{k_l},n_{l-1}^{k_{l-1}},...,n_1^{k_1}] \), with \(n_l>n_{l-1}>...>n_1\) that 
\begin{equation}\label{eq:recursion-thm-old-version}
K_\lambda =
q^{n-1} K_{[1]}K_{\lambda'} - 
q^{2n-2}K_{\lambda''} -
(q^{k_l-1}-1) q^{2n-2} \left(K_{\lambda''}-K_{\lambda'''} \right),
\end{equation}

where \(\lambda'=[n_{l}^{k_l-1},n_l-1,n_{l-1}^{k_{l-1}},...,n_1^{k_1} ] \), \(\lambda''=[n_{l}^{k_l-1},n_l-2,n_{l-1}^{k_{l-1}},...,n_1^{k_1} ]\) and \(\lambda'''= [n_{l}^{k_l-2},(n_l-1)^2,n_{l-1}^{k_{l-1}},...,n_1^{k_1} ]\) reordered into a monotonic sequence, if needed.

Clearly it is more convenient to use $\mu$ as in (\ref{eq:mu-def}), and we reformulate this recursion formula for the matrix Kloosterman sums in terms of it. To denote this shift we will write $K^\mu=K_\lambda$, whenever \(\lambda=\mu'\). Then we have

\begin{theorem}
Let $\mu=[\mu_1\geq\mu_2\geq\dots\geq\mu_r]\vdash n$ be a partition. Then
\[K^{\mu}=q^{n-1}K^{[1]}K^{\mu'}-q^{2n+\mu_r-3}K^{\mu''}+(q^{\mu_r-1}-1)q^{2n-2}K^{\mu'''}.\]
Here \(K^{[\;]}=1\) and  $\mu'=[\mu_1\geq\mu_2\geq\dots\geq\mu_{r-1}\geq\mu_r-1]$, $\mu''=[\mu_1\geq\mu_2\geq\dots\geq\mu_{r-1}-1\geq\mu_r-1]$ and $\mu'''=[\mu_1\geq\mu_2\geq\dots\geq\mu_{r-1}\geq\mu_r-2]$ are as in (\ref{eq:mu-primes}). 
\end{theorem}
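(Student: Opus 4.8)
The plan is to derive this reformulated recursion directly from equation~\eqref{eq:recursion-thm-old-version}, the version of the recursion already proven in \cite{Erdelyi-Toth}, by carefully translating everything from the Jordan-type partition $\lambda$ to the kernel-filtration partition $\mu=\lambda'$. The key observation is that passing to the dual (transpose) partition is an involution that exchanges the operations in a controlled way, so I first need to understand how the three partitions $\lambda',\lambda'',\lambda'''$ appearing in \eqref{eq:recursion-thm-old-version} correspond, under dualization, to the three partitions $\mu',\mu'',\mu'''$ defined in \eqref{eq:mu-primes}.

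First I would unwind the bookkeeping. Write $\lambda=[n_l^{k_l},\dots,n_1^{k_1}]$ with $n_l>\dots>n_1$, so that $\mu=\lambda'$ has $r=n_l$ parts and the largest part of $\mu$ is $\mu_1=\sum_i k_i$, while more generally $\mu_j$ counts the parts of $\lambda$ that are $\geq j$. I would then check the three identities
\[
(\lambda')' = \mu', \qquad (\lambda'')' = \mu''', \qquad (\lambda''')' = \mu'',
\]
where on the left the primes are the operations from \eqref{eq:recursion-thm-old-version} (acting on $\lambda$) and on the right the primes are the operations from \eqref{eq:mu-primes} (acting on $\mu$). Concretely: $\lambda'$ removes one box from the largest part of $\lambda$, whose transpose removes one box from the longest column, i.e. decreases $\mu_r$ by $1$ — that is $\mu'$. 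The partition $\lambda'''$ turns one part equal to $n_l$ into a part equal to $n_l-1$ (keeping the column heights but moving a box), whose transpose is $\mu''$ (decrease $\mu_{r-1}$ and $\mu_r$ each by one). And $\lambda''$ removes two boxes from the top row, dual to removing two boxes from the longest column, i.e. $\mu'''$. The edge cases (when $r=1$, or $\mu_r=1$, or $k_l=1$) must be matched up with the "undefined/omitted" conventions on both sides; this is the fiddliest point but is a finite check. One also needs $n=|\lambda|=|\mu|$ throughout, and to note $k_l$ in the old notation equals $\mu_r$ in the new (the multiplicity of the largest Jordan block size equals the height of the last column).

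Once the dictionary is in place, the rest is pure substitution into \eqref{eq:recursion-thm-old-version}. Grouping the two middle terms of \eqref{eq:recursion-thm-old-version} gives $K_\lambda = q^{n-1}K_{[1]}K_{\lambda'} - q^{2n-2}\big(q^{k_l-1}K_{\lambda''} - (q^{k_l-1}-1)K_{\lambda'''}\big)$; rewriting via the dictionary with $k_l=\mu_r$ yields
\[
K^\mu = q^{n-1}K^{[1]}K^{\mu'} - q^{2n-2}q^{\mu_r-1}K^{\mu''} + q^{2n-2}(q^{\mu_r-1}-1)K^{\mu'''},
\]
and $q^{2n-2}q^{\mu_r-1}=q^{2n+\mu_r-3}$, which is exactly the claimed formula. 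I should also record that the base case $K^{[\,]}=K_{[\,]}=1$ matches, and sanity-check against the scalar case: with $\mu=[n]$ (a single row, $r=1$, $\mu_r=n$, $\lambda=[1^n]$) the formula reads $K^{[n]}=q^{n-1}K^{[1]}K^{[n-1]}-q^{2n+n-3}K^{\mu''}+(q^{n-1}-1)q^{2n-2}K^{[n-2]}$, and since $\mu''$ is undefined when $r=1$ the middle term drops, recovering \eqref{eq:recursion-scalar}. The main obstacle is not any computation but getting the transpose dictionary and all its degenerate cases exactly right — a wrong identification of $\lambda''\leftrightarrow\mu'''$ versus $\lambda'''\leftrightarrow\mu''$ would silently break the exponents — so I would state and prove the three transpose identities as an explicit sub-lemma before touching the Kloosterman sums.
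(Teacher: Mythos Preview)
Your approach is exactly the paper's: the paper says only that the reformulation ``is an elementary argument unfolding this relation, the details of which are omitted,'' and you are supplying those details by transporting \eqref{eq:recursion-thm-old-version} through the duality $\mu=\lambda'$.

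However, your stated dictionary is swapped. You claim $(\lambda'')'=\mu'''$ and $(\lambda''')'=\mu''$, but in fact the primes simply match: $(\lambda'')'=\mu''$ and $(\lambda''')'=\mu'''$. Your verbal justifications are also off. The partition $\lambda''$ shortens \emph{one} row of length $n_l$ by two boxes; those two boxes sit in columns $n_l$ and $n_l-1$, so under transposition the rows $\mu_r$ and $\mu_{r-1}$ each lose one box --- that is $\mu''$, not $\mu'''$. The partition $\lambda'''$ turns \emph{two} parts equal to $n_l$ into parts equal to $n_l-1$ (not one, as you wrote); both removed boxes lie in column $n_l$, so under transposition $\mu_r$ drops by two --- that is $\mu'''$. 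With the correct dictionary the substitution into
\[
K_\lambda = q^{n-1}K_{[1]}K_{\lambda'} - q^{2n-2}q^{k_l-1}K_{\lambda''} + q^{2n-2}(q^{k_l-1}-1)K_{\lambda'''}
\]
(using $k_l=\mu_r$) gives the theorem directly. Curiously, the displayed final formula you wrote is the correct one, so somewhere between stating the dictionary and applying it you silently reverted to the right identifications; had you actually used the dictionary you wrote, the coefficients on $K^{\mu''}$ and $K^{\mu'''}$ would be interchanged. You yourself flagged this as ``the main obstacle,'' and indeed it bit you --- so when you write up the sub-lemma, verify the three transpose identities box by box as above rather than by the heuristic phrases (``removing two boxes from the top row is dual to removing two boxes from the longest column'') that led you astray.
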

%

Note that with our interpretation those terms which correspond to a partition with a negative entry are omitted from the sum.  With this understanding there is no need to split into cases, neither to reorder the elements of the partitions to get a decreasing sequence.

\begin{proof}
Let \(\lambda\) be a partition with \(\mu=[\mu_1,...,\mu_r]\) be its dual. Let \(r_{i}=r_i(\lambda)\) be the number of parts of \(\lambda\) which are equal to \(i \geq 1\). Then \(\mu_{i}=\sum_{j \geq i} r_{j}.\) 

The proof of the reformulation is an elementary argument unfolding this relation, the details of which are omitted.
\end{proof}

\begin{proof}[Proof of Therem \ref{thmJblock}]
We want to show that
\[K^\mu=(-1)^nq^{n(n-1)/2}\sum_{k=0}^n\lambda^k\bar\lambda^{n-k}\sum_{\nu\vdash k}V(\mu,\nu).\]
For convenience we treat the sum so that the terms $V(\mu,\nu)=0$ corresponding to partitions $\nu$ such that $\nu_j>\mu_j$ for some $j$ are included. 

We have developed a  recurrence relation for both sides, it is enough to check that they are compatible.

Since $K^{[\;]}=1=V([~],[~])$ and \[K^{[1]}=-(\lambda+\bar\lambda)=-(V([1],[1])\lambda+V([1],[~])\bar\lambda),\]
the initial conditions match.

Proceeding inductively we get 
\begin{eqnarray*}
q^{n-1}K^{[1]}K^{\mu'}=(-1)^nq^{n(n-1)/2}(\lambda+\bar\lambda)\sum_{k=0}^{n-1}\lambda^k\bar\lambda^{n-k-1}\sum_{\nu\vdash k}V(\mu',\nu)\\
q^{2n+\mu_r-3}K^{\mu''}=(-1)^nq^{n(n-1)/2}\lambda\bar\lambda q^{\mu_r}\sum_{k=0}^{n-2}\lambda^k\bar\lambda^{n-k-2}\sum_{\nu\vdash k}V(\mu'',\nu')\\
q^{2n-2}(q^{\mu_r-1}-1)K^{\mu'''}=(-1)^nq^{n(n-1)/2}\lambda\bar\lambda (q^{\mu_r}-1)\sum_{k=0}^{n-2}\lambda^k\bar\lambda^{n-k-2}\sum_{\nu\vdash k}V(\mu''',\nu').
\end{eqnarray*}

After simplifying, we compare the coefficient of $\lambda^k\bar\lambda^{n-k}$. On the right-hand side we have
\begin{equation}\label{eq:RHS-coeff}
\sum_{\nu\vdash k}V(\mu',\nu)+\sum_{\nu'\vdash k-1}\left(V(\mu',\nu')-q^{\mu_r-1}V(\mu'',\nu')+(q^{\mu_r-1}-1)V(\mu''',\nu')\right),
\end{equation}
where we wrote many extra terms $V(\mu',\nu)=0$ for $\nu\vdash n$ and $V(\mu'',\nu')=V(\mu''',\nu')=0$ where $\nu'\vdash n-1$.

We want to show that the sum in (\ref{eq:RHS-coeff})  produces $\displaystyle{\sum_{\nu\vdash k}V(\mu,\nu)}$. 

Fix $\nu\vdash k$ and $\nu'=[\nu_1\geq\nu_2\geq\dots\geq\nu_{r-1}\geq \nu_r-1]\vdash k-1$. Note that by Lemma \ref{q-binom-eqn} we are almost done. To prove the theorem, we only need to treat 
those $\nu'$-s do not arise as $[\nu_1\geq\nu_2\geq\dots\geq\nu_{r-1}\geq\nu_r-1]$ for which $\nu_{r-1}=\nu_r$. Separate the following cases to show that the contribution of the terms including $\nu'$ is 0:

\begin{itemize}
\item[\textbf{Case 1.}] $\nu'_{r-1}=\nu'_r<\mu_r-1$. Then $V(\mu',\nu')=V(\mu'',\nu')=V(\mu''',\nu')$, so \[V(\mu',\nu')-q^{\mu_r-1}V(\mu'',\nu')+(q^{\mu_r-1}-1)V(\mu''',\nu')=0.\]
\item[\textbf{Case 2.}] $\nu'_{r-1}=\nu'_r>\mu_r-1$. Then $V(\mu',\nu')=V(\mu'',\nu')=V(\mu''',\nu')=0$.
\item[\textbf{Case 3.}] $\nu'_{r-1}=\nu'_r=\mu_r-1$. Then $V(\mu,\nu''')=0$ and the product formula for $V(\mu',\nu')$ and $V(\mu'',\nu')$ differ only in the term $j=r-1$ and there we have
\begin{eqnarray*}
\frac{V(\mu',\nu')-q^{\mu_r-1}V(\mu'',\nu')}{\displaystyle{\prod_{j\neq r-1}\binom{\mu_j-\nu_{j+1}}{\nu_j-\nu_{j+1}}_q\cdot q^{\nu_{j+1}(\mu_j-\nu_j)}}}=\\
\binom{\mu_{r-1}-\mu_r+1}{0}_q\cdot q^{(\mu_r-1)(\mu_{r-1}-\mu_r+1)}-q^{\mu_r-1}\binom{\mu_{r-1}-\mu_r}{0}_q\cdot q^{(\mu_r-1)(\mu_{r-1}-\mu_r)}
\end{eqnarray*}
which is also 0.
\end{itemize}
\end{proof}

\begin{corollary}
$K^\mu=O_q(q^{n^2/2+d})$, where $\displaystyle{d=\frac14\sum_{j=1}^r(\mu_j^2-\delta(\mu_j))}$, where $\delta(m)=1$ if $m$ is odd and $\delta(m)=0$ if $m$ is even.
\end{corollary}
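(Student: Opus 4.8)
The plan is to feed the closed formula of Theorem~\ref{thmJblock} into the Weil bound for the classical Kloosterman sum, thereby reducing the estimate to a degree count in $q$ for the polynomial that enumerates the $a$-invariant subspaces of $\mathbb F_q^n$.

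Since $\alpha\neq 0$, the Frobenius eigenvalues $\lambda,\bar\lambda$ attached to $K(\alpha)$ satisfy $|\iota(\lambda)|=|\iota(\bar\lambda)|=q^{1/2}$, by Weil's bound (equivalently, because the stalk of $\mathcal{K}\ell_2$ is pure of weight $1$ away from $0$, as used in the proof of Theorem~\ref{thmpurity}). Hence $|\iota(\lambda^k\bar\lambda^{n-k})|=q^{n/2}$ for every $k$, and Theorem~\ref{thmJblock} together with the triangle inequality gives
\[|K^\mu|\;\le\;q^{n(n-1)/2}\,q^{n/2}\,N(a),\qquad N(a):=\sum_{k=0}^n\#\bigl\{W\le\mathbb F_q^n:\dim W=k,\ aW=W\bigr\}.\]
Because $n(n-1)/2+n/2=n^2/2$, it remains to show $N(a)=O_q(q^d)$. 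Every $a$-invariant subspace has a unique filtration type, so $N(a)=\sum_\nu V(\mu,\nu)$, the sum running over partitions $\nu$ with at most $r$ parts (the contributions with $\nu_j>\mu_j$ vanishing). Each $V(\mu,\nu)$ is a power of $q$ times $q$-binomial coefficients, hence a polynomial in $q$ with nonnegative coefficients, so no cancellation occurs in the sum and $\deg_q N(a)=\max_\nu\deg_q V(\mu,\nu)$.

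Using $\deg_q\binom{m}{\ell}_q=\ell(m-\ell)$, the product formula for $V(\mu,\nu)$ yields
\[\deg_q V(\mu,\nu)=\sum_{j=1}^r\bigl[(\nu_j-\nu_{j+1})(\mu_j-\nu_j)+\nu_{j+1}(\mu_j-\nu_j)\bigr]=\sum_{j=1}^r\nu_j(\mu_j-\nu_j),\]
so the task is to maximize $\sum_j\nu_j(\mu_j-\nu_j)$ over admissible $\nu$. Over integers $0\le t\le m$ the quantity $t(m-t)$ is maximized at $t=\lceil m/2\rceil$, with value $\lfloor m^2/4\rfloor=(m^2-\delta(m))/4$. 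I would then observe that the sequence $\nu_j^{*}=\lceil\mu_j/2\rceil$ is weakly decreasing (because $\mu$ is) and satisfies $0\le\nu_j^{*}\le\mu_j$, so it is an admissible partition, and it achieves $\sum_j(\mu_j^2-\delta(\mu_j))/4=d$, while clearly no admissible $\nu$ can exceed this term by term. Hence $\deg_q N(a)=d$, so $N(a)=O_q(q^d)$ and $|K^\mu|=O_q(q^{n^2/2+d})$. The only step requiring genuine care is this last one — that the unconstrained maximizer $(\lceil\mu_j/2\rceil)_j$ is automatically a partition bounded above by $\mu$, which is exactly the point where the monotonicity $\mu_1\ge\cdots\ge\mu_r$ is used; everything else is bookkeeping with the degrees of $q$-binomial coefficients and the identity $n(n-1)/2+n/2=n^2/2$.
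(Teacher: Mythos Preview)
Your argument is correct and follows exactly the same route as the paper's (very terse) proof: apply Theorem~\ref{thmJblock} together with the Weil bound $|\iota(\lambda)|=|\iota(\bar\lambda)|=q^{1/2}$ to reduce to estimating $\deg_q V(\mu,\nu)$, compute $\deg_q V(\mu,\nu)=\sum_j\nu_j(\mu_j-\nu_j)$ from the product formula, and maximize term by term at $\nu_j=\lceil\mu_j/2\rceil$. Your write-up is simply a fleshed-out version of the paper's two-line sketch, and your explicit check that the termwise maximizer $(\lceil\mu_j/2\rceil)_j$ is itself an admissible partition is a point the paper leaves implicit.
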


For this it is enough to note that
\[\binom{\mu_j-\nu_{j+1}}{\nu_j-\nu_{j+1}}_q\cdot q^{\nu_{j+1}(\mu_j-\nu_j)}=O_q(q^{\nu_j(\mu_j-\nu_j)}).\]
Thus to maximize the order of $V(\mu,\nu)$ the optimal choice for $\nu_j$ is the closest integer to $\mu_j/2$. 

\begin{corollary}\label{corweights}
If $\mu_1>1$, then $K^\mu$ must admit Frobenius eigenvalues of two different weights.
\end{corollary}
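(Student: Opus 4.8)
The plan is to read off the Frobenius weights occurring in $H^*(a)$ directly from the closed formula of Theorem \ref{thmJblock}. Since the only eigenvalue $\alpha$ of $a$ is nonzero (so $k=0$ in Theorem \ref{thmpurity}), $H^*(a)$ is pure of weight $0$; hence the eigenvalues on $H^i(a)$ all have weight exactly $i$, and ``$K^\mu$ admits Frobenius eigenvalues of two different weights'' is equivalent to ``$H^*(a)$ is nonzero in at least two cohomological degrees''. So it suffices to produce two distinct weights.

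First I would rewrite Theorem \ref{thmJblock} over every finite extension $\mathbb F_{q^m}$. Setting $P_k(q)=\sum_{\nu\vdash k}V(\mu,\nu)$, which by the explicit formula for $V(\mu,\nu)$ is a polynomial with non-negative integer coefficients (a sum of products of $q$-binomials times monomials), the theorem gives
\[ K(a,\mathbb F_{q^m}) = (-1)^n q^{m\,n(n-1)/2}\sum_{k=0}^n P_k(q^m)\,\lambda^{mk}\bar\lambda^{m(n-k)} = \sum_{k,e}(-1)^n c_{k,e}\,\bigl(q^{n(n-1)/2+e}\lambda^k\bar\lambda^{n-k}\bigr)^m, \]
where $c_{k,e}$ is the coefficient of $q^e$ in $P_k$. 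Comparing with the Grothendieck--Lefschetz expansion $K(a,\mathbb F_{q^m})=\sum_i(-1)^i\tr(\operatorname{Frob}_{q^m},H^i(a))$, and using linear independence of the functions $m\mapsto\beta^m$ over distinct Weil numbers $\beta$ together with purity (which forbids cancellation between eigenvalues of different weights), one reads off that the weight $n^2+2e$ occurs in $H^*(a)$ precisely when $\sum_{k\in S}c_{k,e}\neq 0$ for some set $S$ of the form $\{k:\lambda^k\bar\lambda^{n-k}=\text{const}\}$. Since all $c_{k,e}\geq 0$, this holds as soon as a single $c_{k,e}>0$, i.e. as soon as $q^e$ lies in the support of some $P_k$.

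It then remains to exhibit two values of $e$ with $q^e$ in the support of some $P_k$. The only partition of $0$ gives $P_0=\prod_j\binom{\mu_j}{0}_q=1$, so $e=0$ works (weight $n^2$). The unique partition $[1,0,\dots,0]$ of $1$ gives $P_1=V(\mu,[1,0,\dots,0])=\binom{\mu_1}{1}_q=1+q+\dots+q^{\mu_1-1}$, so $e=\mu_1-1$ works (weight $n^2+2(\mu_1-1)$). As $\mu_1>1$, these two weights differ, which is the claim. (If one wishes to name the extreme weight, the optimal choice $\nu_j=\lceil\mu_j/2\rceil$ from the preceding corollary shows $\deg P_{k^*}=d$ for $k^*=\sum_j\lceil\mu_j/2\rceil$, so the top weight of $H^*(a)$ is exactly $n^2+2d>n^2$.)

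The one delicate point -- the main obstacle -- is the rigorous passage from the polynomial identity to a statement about which weights actually occur in $H^*(a)$: a priori distinct monomials $\lambda^k\bar\lambda^{n-k}$ could coincide (exactly when $\lambda/\bar\lambda$ is a root of unity), so contributions from different $k$ might interfere. This is harmless, because any such coincidence is between monomials of the \emph{same} weight, and because the $c_{k,e}$ are non-negative no cancellation can destroy a weight witnessed by a positive coefficient; this is exactly where the positivity of the $q$-binomial coefficients is used.
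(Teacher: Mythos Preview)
Your argument is correct and follows essentially the same route as the paper's: write $K^\mu$ via Theorem~\ref{thmJblock} as a polynomial in $q,\lambda,\bar\lambda$ with non-negative integer coefficients, then exhibit two monomials of different weight. The paper takes the weight-$n^2$ term from the trivial subspace (your $P_0=1$) and the weight-$(n^2+2d)$ term coming from the previous corollary, whereas you use $P_1=\binom{\mu_1}{1}_q$ to witness the second weight; either choice works. You are more careful than the paper about the passage from ``monomials of two weights'' to ``Frobenius eigenvalues of two weights'' (invoking purity, linear independence of $m\mapsto\beta^m$, and the fact that coincidences $\lambda^k\bar\lambda^{n-k}=\lambda^{k'}\bar\lambda^{n-k'}$ stay within a fixed weight so positivity of the $c_{k,e}$ rules out cancellation); the paper leaves this implicit.
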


This follows from the fact that $K^\mu$ is a polynomial of $q$, $\lambda$ and $\bar\lambda$ and by the previous corollary we have that this polynomial has a monomial of weight more than $n^2$ and also one with weight $n^2$ (the one corresponding to the trivial irreducible subspaces).

\vspace{1em}

Now we can prove what happens for non-regular matrices:

\begin{proof}[Proof of Theorem \ref{thmreg}]
Assume $a$ is not regular. Observe that $H^*(a)$ is the tensor product of the cohomology complexes corresponding to the matrix $j_l$ containing the Jordan blocks for each eigenvalue $\lambda_l$ of $a$ (for $1\leq l\leq r$) tensored with a power of the trivial sum (directly follows from \cite{Erdelyi-Toth}, Proposition 13.1.):
\[H^*(a)=\left(\bigotimes_{l=1}^rH^*(j_l)\right)\otimes\left(\bigotimes_{m=1}^dH^*(\mathbb{A}^1,0)\right),\]
with $d=\sum_{1\leq l<l'\leq r}ll'$.

Assume $a$ has a nonzero eigenvalue $\alpha$ such that the eigenspace is not one-dimensional (that means $\mu_1>1$ in the above notation). As the term corresponding to $\alpha$ has two different weights (Corollary \ref{corweights}), so does $H^*(a)$, hence $K(a)$ is not concentrated to a single degree by Theorem \ref{thmpurity}.
\end{proof}

\end{document}